\numberwithin{equation}{section}
\newtheorem{theorem}{Theorem}[section]
\newtheorem{lemma}[theorem]{Lemma}
\newtheorem{corollary}[theorem]{Corollary}
\theoremstyle{definition}
\newtheorem{definition}[equation]{Definition}
\newcommand{\A}{{\mathcal A}}
\newcommand{\K}{{\mathcal K}}
\newcommand{\es}{{\mathcal S}}
\newcommand{\IC}{{\mathbb C}}
\newcommand{\D}{{\mathbb D}}
\newcommand{\ra}{{\rightarrow}}
\newcounter{minutes}\setcounter{minutes}{\time}
\newcounter{hours}\setcounter{hours}{\time}
\begin{document}

\title[A generalization of close-to-convex functions]
{On a generalization of close-to-convex functions}
% \author{S. Ponnusamy %$^\dagger $
% %${}^{~\mathbf{*}}$
% }
% \address{S. Ponnusamy,
% Indian Statistical Institute (ISI), Chennai Centre, SETS (Society
% for Electronic Transactions and security), MGR Knowledge City, CIT
% Campus, Taramani, Chennai 600 113, India. }
% \email{samy@isichennai.res.in, samy@iitm.ac.in}

\author{S.K. Sahoo${}^{\mathbf{*}}$ and N.L. Sharma}
\address{S.K. Sahoo and N.L. Sharma, 
Discipline of Mathematics,
Indian Institute of Technology Indore,
Indore 452 017, India}
\email{swadesh@iiti.ac.in}
\email{sharma.navneet23@gmail.com}

%\author{N.L. Sharma}
%\address{N.L. Sharma, Discipline of Mathematics,
%Indian Institute of Technology Indore,
%Indore 452 017, India}
%\email{navneetlal@iiti.ac.in}

\thanks{* The corresponding author}

% \date{Received date / Revised version date}
% The correct dates will be entered by the editor

\begin{abstract}
A motivation comes from {\em M. Ismail and et al.:
A generalization of starlike functions, 
Complex Variables Theory Appl., 14 (1990), 77--84} to study
a generalization of close-to-convex functions 
by means of a $q$-analog of a difference operator acting on analytic functions
in the unit disk $\mathbb{D}=\{z\in \mathbb{C}:\,|z|<1\}$. 
We use the terminology {\em $q$-close-to-convex functions} for the $q$-analog of
close-to-convex functions. The $q$-theory has wide applications in
special functions and quantum physics which makes the study interesting and pertinent in this field. 
In this paper, 
we obtain some interesting results concerning conditions 
on the coefficients of power series of functions analytic in the unit disk which ensure that 
they generate functions in the $q$-close-to-convex family. As a result we find 
certain dilogarithm functions that are contained in this family.
Secondly, we also study the famous Bieberbach conjecture problem on coefficients of analytic
$q$-close-to-convex functions. This produces several power series of analytic functions convergent to
basic hypergeometric functions.   
 
\end{abstract}

\subjclass[2010]{30C45; 30C50; 30C55; 30B10; 33B30; 33D15; 40A30; 47E05.}
\keywords{Univalent and analytic functions; starlike and close-to-convex functions; 
Bieberbach-de Branges theorem; $q$-difference operator; $q$-starlike and 
$q$-close-to-convex functions; special functions.}

\maketitle

\begin{center}
\texttt{File:~\jobname .tex, printed: \number\day-\number\month-\number\year,
\thehours.\ifnum\theminutes<10{0}\fi\theminutes}
\end{center}

%%%%%%%%%%%%%%%%%%%%%%%%%%%%%%%%%
%%%%%%%%%%% SECTION 1 %%%%%%%%%%%
%%%%%%%%%%%%%%%%%%%%%%%%%%%%%%%%%
\section{Introduction}

Denote by $\A$, the class of functions $f(z)$, normalized by 
$f(0)=0=f'(0)-1$, that are analytic in the unit disk 
$\D:=\{z\in\IC:\,|z|<1\}$. 
In other words, the functions $f(z)$ in $\A$ have the power series representation
$$f(z)=z+\sum_{n=2}^\infty a_n z^n, \quad z\in\D.
$$
We denote by $\es$, the class of 
univalent (i.e. analytic and one-one) functions in $\D$.
Denote by $\es^*$, the subclass consisting of functions $f(z)$ in $\es$
that are starlike with respect to the origin, i.e.
$tw\in f(\D)$ whenever $t\in[0,1]$ and $w\in f(\D)$.
Analytically, it is well-known that $f(z)\in \es^*$ if and only if
%\begin{equation}\label{eq1}
$${\rm Re}\,\left(\frac{zf'(z)}{f(z)}\right)>0, \quad z\in \D\,;
$$
%\end{equation}
and a function $f(z)\in\A$ is said to be {\em close-to-convex} 
if there exists $g(z)\in \es^*$ such that
$${\rm Re}\,\left(\frac{zf'(z)}{g(z)}\right)>0, \quad z\in \D\,.
$$
Then we say that $f\in\K$ with the function $g$.
The class of close-to-convex functions defined in the unit disk is denoted by $\K$.
One can easily verify the fact that $\es^*\subset \K\subset \es$ 
(see for instance \cite{Dur83}). For several interesting geometric properties
of these classes, one can refer to the standard books \cite{Good83,Pom75}.

A $q$-analog of the class of starlike functions was first introduced 
in \cite{IMS90} by means 
of the {\em $q$-difference operator} $(D_qf)(z)$ acting on functions 
$f(z)\in \A$ defined by
\begin{equation} \label{q-operator}
(D_qf)(z)=\frac{f(z)-f(qz)}{z(1-q)}, \quad z\in \D\setminus\{0\}, \quad (D_qf)(0)=f'(0),
\end{equation}
where $q\in (0,1)$. Note that the $q$-difference operator 
plays an important role in the theory of hypergeometric series and quantum physics (see for instance 
\cite{And74,Ern02,Fin88,Kir95,Sla66}). One can clearly see that $(D_qf)(z)\to f'(z)$ as $q\to 1^{-}$. 
This difference operator helps us to generalize the class of starlike functions $\es^*$ analytically.
We denote by $\es_q^*$, the class of functions in this generalized family. For the sake of convenience, 
we also call functions in $\es_q^*$ the {\em $q$-starlike functions}. This is defined as follows: 
\begin{definition}\label{def1}
A function $f\in\A$ is said to belong to the class $\es_q^*$ if
$$\left|\frac{z}{f(z)}(D_qf)(z)-\frac{1}{1-q}\right|\leq \frac{1}{1-q}, \quad z\in \D\,.
$$
\end{definition}
Clearly, when $q\ra 1^{-}$, the class $\es_q^*$ will coincide with $\es^*$.  

As $\es_q^*$ generalizes $\es^*$ in the above manner, a similar form of $q$-analog 
of close-to-convex functions was expected and it is defined in the following form 
(see \cite{RS12}).  
%This generalization has already been made in 2012 (see \cite{RS12}).
\begin{definition}\label{def2}
A function $f\in\A$ is said to belong to the class $\K_q$ if there exists $g\in \es^*$ such that
$$\left|\frac{z}{g(z)}(D_qf)(z)-\frac{1}{1-q}\right|\leq \frac{1}{1-q}, \quad z\in \D\,.
$$
Then we say that $f\in\K_q$ with the function $g$.
\end{definition}
In \cite{RS12}, the authors have investigated some basic properties of functions
that are in $\K_q$. Some of these results are also recalled in this paper
in order to exhibit their interesting consequences.
As $(D_qf)(z)\to f'(z)$, as $q\to 1^{-}$, we observe in the limiting sense that the closed disk 
$|w-(1-q)^{-1}|\le (1-q)^{-1}$ 
becomes the right half-plane ${\rm Re}\,(zf'(z)/g(z))>0$ and hence
the class $\K_q$ clearly reduces to $\K$. In this paper, we refer to the functions in the class
$\K_q$ the {\em $q$-close-to-convex functions}. 
For the sake of convenience, we use the notation $\es_q^*$ instead of the notation $PS_q$ 
used in \cite{IMS90} and $\K_q$ instead of $PK_q$ used in \cite{RS12}.
It is easy to see that
$\es_q^*\subset \K_q$ for all $q\in (0,1)$.
Clearly, one can easily see from the above discussion that 
%functions in $\K_q$ for all $q\in(0,1)$ are close-to-convex and 
%hence univalent in the unit disk. Thus, we have
$$\bigcap_{0<q<1}\K_q\subset  \K\subset \es.
$$
%Indeed, if $f\in\K_q$ for all $q\in(0,1)$ with the function $g$ then as $q\to 1^{-}$, 
%we find that the domain values of the function $zf'(z)/g(z)$ is the right half-plane
%$\{w:{\rm Re}\,w>0\}$.

Our main aim in this paper is to consider the following two ideas.

The first idea has its genesis in the work of Frideman \cite{Fri46}. 
He proved that there are only nine functions in the class $\es$ whose
coefficients are rational integers. They are
$$z,\quad \frac{z}{1\pm z}, \quad \frac{z}{1\pm z^2}, \quad \frac{z}{(1\pm z)^2} 
\quad \frac{z}{1\pm z+z^2}.
$$
It is easy to see that these functions map the unit disk $\D$ onto starlike domains.
Using the idea of MacGregor \cite{Mac69}, we derive some sufficient conditions
for functions to be in $\K_q$ whose coefficients are connected with certain monotone properties.
These sufficient conditions help us to examine functions of dilogarithm types \cite{Kir95, Zag07}
which are in the $\K_q$ family.
Certain special functions, which are in the starlike and close-to-convex family, have been
well-investigated in \cite{HPV10,MS61,MM90,Pon97,PV01,RS86,Sil93}.

The second idea deals with the famous Bieberbach conjecture problem in analytic
univalent function theory \cite{DeB85,Dur83}.
A necessary and sufficient condition for a function $f(z)$ to be in $S_q^*$ 
is obtained in \cite{IMS90} by means of an integral representation of the function $zf'(z)/f(z)$ 
which yields the maximum moduli of coefficients of $f$. Using this condition, the Bieberbach
conjecture problem for $q$-starlike functions has been solved in the following form.

\medskip
\noindent
{\bf Theorem~A.~}\cite[Theorem~1.18]{IMS90} 
{\em If $f(z)=z+\sum_{n=2}^\infty a_nz^n$ belongs to the class $\es^*_q$, then $|a_n|\le c_n$
with equality holds for all $n$ if and only if $f(z)$ is a rotation of 
$$k_q(z):=z\exp\left[\sum_{n=1}^\infty\frac{-2\ln q}{1-q^n}z^n\right]=z+\sum_{n=2}^\infty c_n z^n,\quad z\in \D.
$$
}

\medskip
Note that the function $k_q(z)$ plays a role of the Koebe function $k(z)$. 
By differentiating once the above expression for $k_q(z)$ and comparing the
coefficients of $z^{n-1}$ in both sides, we get the recurrence relation in $c_n$:
$$c_2=\frac{-2\ln q}{1-q}~\mbox{ and }~
(n-1)c_n=\frac{-2\ln q}{1-q^{n-1}}(n-1)+\sum_{k=2}^{n-1}\frac{-2\ln q}{1-q^{k-1}}c_{n+1-k}(k-1),
\quad n\ge 3.
$$
It can be easily verified that Theorem~A turns into the famous conjecture 
of Bieberbach (known as Bieberbach-de Branges Theorem) for the class $\es^*$,
if $q\to 1^{-}$.
Comparing with the Bieberbach-de Branges theorem for close-to-convex functions,
one would expect that Theorem~A also holds true for $q$-close-to-convex functions.
However, this problem remains an open problem.
Indeed, in this manuscript, we obtain an optimal coefficient bound for $q$-close-to-convex
functions leading to the Bieberbach-de Branges 
theorem for close-to-convex functions, when $q\to 1^{-}$.
Finally, for a special attention, we collect few consequences of the 
Bieberbach-de Branges theorem for the class $\K_q$ with respect to the 
nine starlike functions considered above.

\section{Properties for $f(z)=z+\displaystyle\sum_{n=2}^\infty A_nz^n$ to be in $\K_q$}

In this section, we mainly concentrate on problems in situations where the co-efficients $A_n$
of functions $f(z)=z+\sum_{n=2}^\infty A_nz^n$ in $\K_q$ are real, non-negative and connected with certain monotone 
properties. Similar investigations for the class of close-to-convex functions are
studied in \cite{Ale15,Mac69} (see references there in for initial contributions of 
Fej\'er and Szeg\"o in this direction).

We obtain several sufficient conditions for the representation
$f(z)=z+\sum_{n=2}^\infty A_nz^n$ to be in $\K_q$. Rewriting this representation, we get
\begin{equation}\label{eq2} 
f(z)=\sum_{n=0}^\infty A_n z^n \quad (A_0=0,~A_1=1).
\end{equation}
If $f(z)$ is of the form (\ref{eq2}), then a simple computation yields
\begin{equation}\label{eq2-1}
(D_q f)(z)=1+\sum_{n=2}^\infty \frac{A_n (1-q^n)}{1-q}\, z^{n-1}
\end{equation}
for all $z\in \D$. With this, we now collect a number of sufficient conditions for functions
to be in $\K_q$.

\begin{lemma}\label{thm1}\cite[Lemma~1.1(1)]{RS12}
Let $f(z)$ be of the form $(\ref{eq2})$
% \begin{equation}\label{eq2} 
% f(z)=\sum_{n=1}^\infty A_n z^n \quad (A_0=0,~A_1=1)
% \end{equation}
and
$\sum_{n=1}^\infty |B_{n+1}-B_n|\leq 1,
$ with $B_n=A_n(1-q^n)/(1-q)$. Then $f(z)\in \K_q$ with $g(z)=z/(1-z)$.
\end{lemma}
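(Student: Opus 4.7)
The plan is to verify the inequality in Definition \ref{def2} directly for $g(z)=z/(1-z)$, which conveniently gives $z/g(z)=1-z$. So the goal reduces to establishing
$$\left|(1-z)(D_q f)(z)-\frac{1}{1-q}\right|\le \frac{1}{1-q}, \quad z\in \D.$$
The first step is to rewrite $(D_q f)(z)$ as a single power series $\sum_{n=1}^\infty B_n z^{n-1}$ using (\ref{eq2-1}), where $B_1=A_1(1-q)/(1-q)=1$ and $B_n=A_n(1-q^n)/(1-q)$ for $n\ge 2$, exactly as in the hypothesis.

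The second step is the key algebraic manipulation: multiplying by $(1-z)$ creates a telescoping series. Explicitly,
$$(1-z)(D_q f)(z)=\sum_{n=1}^\infty B_n z^{n-1}-\sum_{n=1}^\infty B_n z^n = B_1+\sum_{n=1}^\infty (B_{n+1}-B_n)z^n = 1+\sum_{n=1}^\infty (B_{n+1}-B_n)z^n.$$
Subtracting $1/(1-q)$ and using $1-1/(1-q)=-q/(1-q)$ yields
$$(1-z)(D_q f)(z)-\frac{1}{1-q}=-\frac{q}{1-q}+\sum_{n=1}^\infty (B_{n+1}-B_n)z^n.$$

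The third and final step is the triangle inequality. For $z\in \D$ we have $|z|^n<1$, so
$$\left|(1-z)(D_q f)(z)-\frac{1}{1-q}\right|\le \frac{q}{1-q}+\sum_{n=1}^\infty |B_{n+1}-B_n|\,|z|^n \le \frac{q}{1-q}+1 = \frac{1}{1-q},$$
where the last step uses the hypothesis $\sum_{n=1}^\infty |B_{n+1}-B_n|\le 1$. Since $g(z)=z/(1-z)\in \es^*$, Definition \ref{def2} is fulfilled and the conclusion $f\in \K_q$ with this $g$ follows.

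I do not foresee a genuine obstacle here; the lemma is essentially an exercise in recognizing that the choice $g(z)=z/(1-z)$ converts $z(D_q f)(z)/g(z)$ into a difference that telescopes against the sequence $(B_n)$, after which the triangle inequality closes the argument. The only small care required is verifying $B_1=1$ so that the constant term of the telescoped series is exactly $1$, which makes the centering by $1/(1-q)$ produce the clean bound $q/(1-q)+1$.
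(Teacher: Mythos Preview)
Your proof is correct. The paper does not supply its own proof of this lemma---it simply cites \cite[Lemma~1.1(1)]{RS12}---but the argument you give is exactly the template the paper itself uses for the companion results (Theorems~\ref{thm2-1} and \ref{thm2-2}): expand $(z/g(z))(D_qf)(z)$ as a power series, telescope against the $B_n$, and close with the triangle inequality to obtain $\frac{1}{1-q}-\big|(z/g(z))(D_qf)(z)-\frac{1}{1-q}\big|\ge 0$.
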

%\begin{proof}
%Starting with $|B_n|$, we see that
%$$|B_n|=\Big|\sum_{k=1}^{n-1}(B_k-B_{k+1})-1\Big|
%%\leq \sum_{k=1}^{n-1}|B_k-B_{k+1}|+1
%\leq \sum_{k=1}^\infty |B_k-B_{k+1}|+1
%\leq 2
%$$
%%so that
%%$$\left|\frac{A_n(1-q^n)}{1-q}\right|\leq 2
%%$$
%%and 
%and hence
%$ \displaystyle |A_n|\leq 2/(1+q+\cdots +q^{n-1}).
% %\frac{2}{1+q+\cdots q^{n-1}}.
% $
%By applying the root test, one can see that the radius of convergence of
%$\sum_{n=0}^\infty A_nz^n$ is not less than unity.
%Therefore, $f\in \A$. To complete the proof we need to show that
%\begin{equation}\label{eq3}
%\left|(1-z)(D_q f)(z)-\frac{1}{1-q}\right|\leq \frac{1}{1-q}, \quad z\in \D.
%\end{equation}
%By (\ref{eq2-1}), the above inequality can be rewritten in the equivalent form
%\begin{align*} 
%0 & \leq \frac{1}{1-q}-\left|1+\sum_{n=1}^\infty\left(\frac{A_{n+1}(1-q^{n+1})}{1-q}
%-\frac{A_n(1-q^n)}{1-q}\right)z^n-\frac{1}{1-q}\right|, \quad z\in \D\\
%& = T, \mbox{ say}.
%\end{align*}
%Now, the right hand of the above, by applying triangle inequality, gives
%\begin{align*}
%T & \geq \frac{1}{1-q}-\left|1-\frac{1}{1-q}\right|-\left|\sum_{n=1}^\infty (B_{n+1}-B_n)z^n\right|\\
%& \geq \frac{1}{1-q}-\frac{q}{1-q}-\sum_{n=1}^\infty |B_{n+1}-B_n|\,|z|^n\\  
%& \geq 1- \sum_{n=1}^\infty |B_{n+1}-B_n|\geq 0
%%& \geq 0
%\end{align*}
%where the last relation holds by the hypothesis.
%Thus, (\ref{eq3}) holds for all $z\in \D$. Hence, $f\in \K_q$ with $g(z)=z/(1-z)$.
%\end{proof}

As a consequence of Lemma~\ref{thm1}, we have
% From Theorem~\ref{thm1}, we immediately have the following:

\begin{theorem}\label{thm1.1}
Let $\{A_n\}$ be a sequence of real numbers such that
$ B_n=A_n(1-q^n)/(1-q) 
%B_n=\frac{A_n(1-q^n)}{1-q}.
$ for all $n\geq 1.$
Suppose that
$$1\geq B_2\geq B_3 \geq \cdots \geq B_n \geq \cdots \geq 0
~~\mbox{ or }~~
1\leq B_2\leq B_3 \leq \cdots \leq B_n \leq \cdots \leq 2.
$$ 
Then $f(z)=z+\sum_{n=2}^\infty A_nz^n \in \K_q$ with $g(z)=z/(1-z)$.
\end{theorem}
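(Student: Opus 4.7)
The plan is simply to verify the hypothesis of Lemma~\ref{thm1}, namely $\sum_{n=1}^\infty |B_{n+1}-B_n|\le 1$, in each of the two monotonicity cases, and then invoke the lemma directly.

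First, I would note that the normalization $A_1=1$ forces $B_1=A_1(1-q)/(1-q)=1$, so the chain of inequalities in the hypothesis can be extended to start at $B_1=1$. In other words, the first case reads $1=B_1\ge B_2\ge B_3\ge\cdots\ge 0$ and the second reads $1=B_1\le B_2\le B_3\le\cdots\le 2$. This small observation is what makes the telescoping argument work cleanly.

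Next, in Case 1 the sequence $\{B_n\}$ is monotone non-increasing and bounded below by $0$, so it converges to some limit $L\in[0,1]$, and $|B_{n+1}-B_n|=B_n-B_{n+1}$. The partial sum telescopes:
\begin{equation*}
\sum_{n=1}^{N}|B_{n+1}-B_n|=\sum_{n=1}^{N}(B_n-B_{n+1})=B_1-B_{N+1}=1-B_{N+1},
\end{equation*}
which tends to $1-L\le 1$ as $N\to\infty$. In Case 2 the sequence is monotone non-decreasing and bounded above by $2$, converging to some $L\in[1,2]$, with $|B_{n+1}-B_n|=B_{n+1}-B_n$, and the analogous telescoping gives $\sum_{n=1}^{N}(B_{n+1}-B_n)=B_{N+1}-1\to L-1\le 1$. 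In both cases the series $\sum_{n=1}^\infty|B_{n+1}-B_n|$ is bounded by $1$.

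Finally, applying Lemma~\ref{thm1} to the function $f(z)=z+\sum_{n=2}^\infty A_n z^n$ yields $f\in\K_q$ with $g(z)=z/(1-z)$, which is the desired conclusion. There is no serious obstacle here; the only point requiring a moment's care is recognizing that $B_1=1$ so that the telescoping endpoints match the prescribed bounds $0$ and $2$, making the geometric content of the monotonicity hypotheses exactly a bounded-variation condition on $\{B_n\}$.
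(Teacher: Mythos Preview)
Your proof is correct and follows essentially the same route as the paper: both verify the hypothesis of Lemma~\ref{thm1} by telescoping the partial sums of $\sum |B_{n+1}-B_n|$ under each monotonicity assumption, using $B_1=1$ as the anchor. The only difference is cosmetic---you explicitly name the limit $L$ of the monotone bounded sequence, whereas the paper simply bounds $B_1-B_{k+1}\le B_1=1$ directly.
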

\begin{proof}
We know that 
$$\sum_{n=1}^\infty|B_{n+1}-B_n|=\lim_{k\rightarrow \infty}\sum_{n=1}^k|B_{n+1}-B_n|.
$$
If $1\geq B_2\geq B_3 \geq \cdots \geq B_n \geq \cdots \geq 0 , $
we see that
$$ \lim_{k\rightarrow \infty}\sum_{n=1}^k|B_{n+1}-B_n|=\lim_{k\rightarrow \infty}(B_1-B_{k+1})\leq B_1=1. 
$$
Similarly, if $1\leq B_2\leq B_3 \leq \cdots \leq B_n \leq \cdots \leq 2$ then we get
%$$ \lim_{k\rightarrow \infty}\sum_{n=1}^k|B_{n+1}-B_n|=\lim_{k\rightarrow \infty}(B_{k+1}-B_1)\leq 1.
%$$
%Since 
$\sum_{n=1}^\infty |B_{n+1}-B_n|\leq 1$. Thus, by Lemma~\ref{thm1}, we prove the assertion
of our theorem.
\end{proof}

\noindent
{\bf Remark.} If we choose the limit $q\to 1^{-}$ in Theorem~\ref{thm1.1}, one can obtain 
the results of Alexander~\cite{Ale15} and MacGregor~\cite{Mac69}.

\noindent
{\bf Example.} {\em The quantum dilogarithm} function is defined by 
$$Li_2(z;q)=\sum_{n=1}^\infty \frac{z^n}{n(1-q^n)},\quad |z|<1,~0<q<1.
$$
Note that this function is studied by Kirillov~\cite{Kir95} (see also \cite[p.28]{Zag07})
and is a $q$-deformation of the ordinary {\em dilogarithm function}~\cite{Kir95}
defined by $Li_2(z)=\sum_{n=1}^\infty (z^n/n^2)$, $|z|<1$, in the sense
that 
$$\lim_{\epsilon\to 0}\epsilon Li_2(z;e^{-\epsilon})=Li_2(z).
$$
By Theorem~\ref{thm1.1}, one can ascertain that the function
$(1-q)Li_2(z;q)\in \K_q$.

% \begin{theorem}\label{thm1-1}
% Let $f(z)$ be of the form (\ref{eq2}) and suppose that
% $$\sum_{n=1}^\infty |B_n|\leq 1 , \quad B_n=\frac{A_{n+1}(1-q^{n+1})}{1-q}-\frac{A_n(1-q^n)}{1-q} .
% $$
% Then $f\in PC_q$ with $g(z)=z/(1-z)$.
% \end{theorem}
% 
% \begin{lemma}\label{lem1-1}
% Let
% \end{lemma}

% \begin{theorem}\label{thm2}
% Let $f$ be defined by (\ref{eq2}) and suppose that 
% $$\sum_{n=1}^\infty |B_{n-1}-2B_n+B_{n+1}|\leq 1, \quad B_n=\frac{A_n(1-q^n)}{1-q}.
% $$
% Then $f\in PC_q$ with $g(z)=z/(1-z)^2$.
% \end{theorem}

\begin{theorem}\label{thm2-1}
Let $f$ be defined by $(\ref{eq2})$ and suppose that 
$$\sum_{n=1}^\infty |B_n-B_{n-1}|\leq 1, \quad B_n=\frac{A_{n+1}(1-q^{n+1})}{1-q}
-\frac{A_n(1-q^n)}{1-q}.
$$
Then $f\in \K_q$ with $g(z)=z/(1-z)^2$.
\end{theorem}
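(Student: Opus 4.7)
The plan is to reduce the statement, via Definition~\ref{def2}, to showing
$$\left|(1-z)^2(D_qf)(z)-\frac{1}{1-q}\right|\le \frac{1}{1-q}, \quad z\in\D,$$
since the Koebe function $g(z)=z/(1-z)^2$ belongs to $\es^*$ and satisfies $z/g(z)=(1-z)^2$. The main work is then a bookkeeping computation that expresses $(1-z)^2(D_qf)(z)$ in terms of the first and second differences of the coefficient sequence appearing in the hypothesis, and then bounds the resulting series by the triangle inequality.

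To carry this out, I would introduce the shorthand $b_n:=A_n(1-q^n)/(1-q)$, so that $b_0=0$, $b_1=1$, and (\ref{eq2-1}) becomes $(D_qf)(z)=\sum_{n\ge 1}b_n z^{n-1}$. A term-by-term expansion of the product $(1-2z+z^2)\sum_{n\ge 1}b_n z^{n-1}$ then gives
$$(1-z)^2(D_qf)(z)=1+\sum_{k=1}^\infty \bigl(b_{k+1}-2b_k+b_{k-1}\bigr)z^k.$$
Writing $B_n=b_{n+1}-b_n$ (so that in particular $B_0=b_1-b_0=1$), the second-difference identity $b_{k+1}-2b_k+b_{k-1}=B_k-B_{k-1}$ rewrites this as
$$(1-z)^2(D_qf)(z)=1+\sum_{k=1}^\infty (B_k-B_{k-1})\,z^k.$$

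Subtracting $1/(1-q)$ turns the leading constant into $-q/(1-q)$, and the triangle inequality combined with $|z|<1$ and the hypothesis yields
$$\left|(1-z)^2(D_qf)(z)-\frac{1}{1-q}\right|\le \frac{q}{1-q}+\sum_{k=1}^\infty |B_k-B_{k-1}|\le \frac{q}{1-q}+1=\frac{1}{1-q},$$
which is precisely what Definition~\ref{def2} demands. The only delicate point is keeping the indexing straight so that the boundary value $B_0=1$ is correctly extracted from $b_0=0$, $b_1=1$; this is exactly what makes the ``extra'' residue $q/(1-q)$ left over from the constant term combine with the hypothesis bound of $1$ to give precisely the radius $1/(1-q)$, with no slack wasted. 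Apart from this indexing check, the proof is a direct parallel of the argument used for Theorem~\ref{thm1.1}, with the univalent comparison function $z/(1-z)$ replaced by the Koebe function $z/(1-z)^2$ and a single difference replaced by a second difference.
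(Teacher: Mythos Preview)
Your argument is correct and follows the same route as the paper: expand $(1-z)^2(D_qf)(z)$ as $1+\sum_{k\ge 1}(B_k-B_{k-1})z^k$ and bound by the triangle inequality, using $B_0=1$ so that the constant residue $q/(1-q)$ combines with the hypothesis to give exactly $1/(1-q)$. The one step the paper spells out that you leave implicit is the verification that $f\in\A$ to begin with (the paper first bounds $|A_n|$ via telescoping and applies the root test); in your setup this follows because $\sum_{k\ge 1}(B_k-B_{k-1})z^k$ converges absolutely on $\D$ by hypothesis, hence $(1-z)^2(D_qf)(z)$ and therefore $f$ are analytic there, but since (\ref{eq2}) is only a formal series it is worth making this explicit.
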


\begin{proof}
Starting with $|B_n|$, we see that
$$|B_n|=\Big|\sum_{k=1}^{n}(B_k-B_{k-1})+1\Big|
%\leq \sum_{k=1}^{n-1}|B_k-B_{k+1}|+1
\leq \sum_{k=1}^\infty |B_k-B_{k-1}|+1
\leq 2.
$$
Hence, for all $n\ge 2$, we have
$$\left|\frac{A_n(1-q^n)}{1-q}-\frac{A_{n-1}(1-q^{n-1})}{1-q}\right|\leq 2 .
$$
Now, by using the repeated triangle inequality, we see that
\begin{align*}
\left|\frac{A_n(1-q^n)}{1-q}\right| & = \left|\frac{A_n(1-q^n)}{1-q}-\frac{A_{n-1}(1-q^{n-1})}{1-q}
+\frac{A_{n-1}(1-q^{n-1})}{1-q}-\frac{A_{n-2}(1-q^{n-2})}{1-q}\right.\\
& \hspace*{7cm}\left. +\cdots + \frac{A_{2}(1-q^{2})}{1-q}-1+1\right|\\
& \le 2(n-1)+1 =2n-1
\end{align*}
and so
$\displaystyle |A_n|\leq 
(2n-1)/(1+q+\cdots +q^{n-1}) 
%\frac{2n-1}{1+q+\cdots q^{n-1}}.
$.
By applying the root test, one can see that the radius of convergence of 
$\sum_{n=0}^\infty A_nz^n$ is not less than unity.
Therefore, $f\in \A$.

Since $f$ is of the form (\ref{eq2}), we compute by using (\ref{eq2-1}) that
\begin{align*}(1-z)^2 (D_q f)(z) 
& = 1+\frac{A_2(1-q^2)}{1-q} z -2z \\
&\quad +\sum_{n=3}^\infty \left[\frac{A_n(1-q^n)}{1-q}-\frac{2A_{n-1}(1-q^{n-1})}{1-q}
+\frac{A_{n-2}(1-q^{n-2})}{1-q}\right]z^{n-1}.
\end{align*}

% In one hand, if we assume $B_n=A_n(1-q^n)/(1-q)$, we get
% $$(1-z)^2 (D_q f)(z)=1+(B_2-2)z+\sum_{n=3}^\infty (B_n-2B_{n-1}+B_{n-2})z^{n-1}.
% $$
% 
% Hence, 
% \begin{align*}
% T & = \frac{1}{1-q}-\left|(1-z)^2(D_q f)(z)-\frac{1}{1-q}\right|\\
% & = \frac{1}{1-q} -\left| 1+(B_2-2)z+\sum_{n=3}^\infty (B_n-2B_{n-1}+B_{n-2})z^{n-1}-\frac{1}{1-q}\right|\\
% & \geq 1- |B_2-2| - \sum_{n=3}^\infty |B_n-2B_{n-1}+B_{n-2}|\\
% & = 1- \sum_{n=2}^\infty |B_n-2B_{n-1}+B_{n-2}| \qquad \mbox{(since $B_0=0$)}\\
% & \geq 0,
% \end{align*}
% if 
% $$\sum_{n=2}^\infty |B_n-2B_{n-1}+B_{n-2}| \leq 1 .
% $$
 
%Since
%$
%$ \displaystyle B_n= \frac{1-q^{n+1}}{1-q}A_{n+1} - \frac{1-q^n}{1-q} A_n ,$
%$
By the definition of $B_n$ as given in the hypothesis, we have
$$(1-z)^2 (D_q f)(z)=1+(B_1-1)z+\sum_{n=3}^\infty (B_{n-1}-B_{n-2})z^{n-1}.
$$
Hence,
%\begin{align*}
$$ \frac{1}{1-q}-\Big|(1-z)^2(D_q f)(z)-\frac{1}{1-q}\Big|
%& = \frac{1}{1-q} -\Big| 1+(B_1-1)z+\sum_{n=3}^\infty (B_{n-1}-B_{n-2})z^{n-1}-\frac{1}{1-q}\Big|\\
\geq 1- |B_1-1| - \sum_{n=3}^\infty |B_{n-1}-B_{n-2}|\geq 0,
$$
%\\
%& \geq 0,
%\end{align*}
if $\sum_{n=2}^\infty |B_{n-1}-B_{n-2}| \leq 1.$
This proves the assertion of our theorem.
\end{proof}

By Theorem~\ref{thm2-1}, we immediately have the following result which generalizes couple 
of results of MacGregor (see \cite[Theorems~3 and 5]{Mac69}).

\begin{theorem}\label{thm2-1.1}
Let $\{A_n\}$ be a sequence of real numbers such that
$$A_0=0,~A_1=1 ~\mbox{ and }~ B_n=\frac{A_{n+1}(1-q^{n+1})}{1-q}
-\frac{A_n(1-q^n)}{1-q}.
$$
Suppose that
$$1\geq B_1\geq B_2 \geq \cdots \geq B_n \geq \cdots \geq 0
~~\mbox{ or }~~
1\leq B_1\leq B_2 \leq \cdots \leq B_n \leq \cdots \leq 2.
$$
Then $f(z)=z+\sum_{n=2}^\infty A_nz^n \in \K_q$ with $g(z)=z/(1-z)^2$.
\end{theorem}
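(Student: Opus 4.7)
The plan is straightforward: derive Theorem~\ref{thm2-1.1} as a direct corollary of Theorem~\ref{thm2-1} by verifying the hypothesis $\sum_{n=1}^\infty |B_n - B_{n-1}| \leq 1$. The crucial bookkeeping observation is that if one formally extends the definition of $B_n$ to $n=0$, then
\[
B_0 = \frac{A_1(1-q)}{1-q} - \frac{A_0(1-q^0)}{1-q} = 1,
\]
since $A_0 = 0$ and $A_1 = 1$. Thus $B_0 = 1$ sits naturally at the top of the decreasing chain and at the bottom of the increasing chain appearing in the hypotheses, which is exactly what makes the telescoping argument go through cleanly.

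In the decreasing case $1 = B_0 \geq B_1 \geq B_2 \geq \cdots \geq 0$, every difference $B_n - B_{n-1}$ is non-positive, so $|B_n - B_{n-1}| = B_{n-1} - B_n$, and the telescoping identity gives
\[
\sum_{n=1}^\infty |B_n - B_{n-1}| = B_0 - \lim_{n\to\infty} B_n \leq B_0 = 1.
\]
In the increasing case $1 = B_0 \leq B_1 \leq B_2 \leq \cdots \leq 2$, one has $|B_n - B_{n-1}| = B_n - B_{n-1}$, and telescoping gives
\[
\sum_{n=1}^\infty |B_n - B_{n-1}| = \lim_{n\to\infty} B_n - B_0 \leq 2 - 1 = 1.
\]
In either case, Theorem~\ref{thm2-1} applies and delivers $f \in \K_q$ with $g(z) = z/(1-z)^2$.

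This reduction mirrors exactly the derivation of Theorem~\ref{thm1.1} from Lemma~\ref{thm1}. There is no genuine obstacle; the only point requiring a touch of care is observing that the bounds $1$ and $2$ appearing in the two monotone chains are precisely calibrated so that, together with the anchor value $B_0 = 1$, the telescoping sum is bounded by $1$. These bounds are in fact forced by the need to match the hypothesis of Theorem~\ref{thm2-1}, which explains the shape of the statement.
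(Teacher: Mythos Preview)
Your proposal is correct and follows exactly the approach the paper intends: the paper states that Theorem~\ref{thm2-1.1} follows immediately from Theorem~\ref{thm2-1}, and your telescoping verification of $\sum_{n\ge 1}|B_n-B_{n-1}|\le 1$ (using the anchor $B_0=1$) is precisely how that implication is realized.
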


\begin{theorem}\label{thm2-2}
Let $f$ be defined by $f(z)=z+\sum_{n=2}^\infty A_{2n-1} z^{2n-1}$ and suppose that 
$$\sum_{n=1}^\infty |B_{2n-1}-B_{2n+1}|\leq 1,\quad B_n=\frac{A_n(1-q^n)}{1-q}.
$$
Then $f\in \K_q$ with $g(z)=z/(1-z^2)$.
\end{theorem}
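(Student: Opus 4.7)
The plan is to mimic the proof of Theorem~\ref{thm2-1}, with the factor $(1-z)^2$ replaced by $(1-z^2)$; this is precisely $z/g(z)$ for the present choice $g(z)=z/(1-z^2)$. The key observation is that because only the odd-indexed coefficients of $f$ are nonzero, the series (\ref{eq2-1}) for $(D_qf)(z)$ involves only even powers of $z$. Consequently, multiplying by $1-z^2$ produces telescoping differences $B_{2n+1}-B_{2n-1}$ that match the hypothesis exactly.

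First I would verify that $f\in\A$. Writing $B_{2n-1}=B_1+\sum_{k=1}^{n-1}(B_{2k+1}-B_{2k-1})$ and using $B_1=1$ together with the hypothesis gives $|B_{2n-1}|\le 2$, so $|A_{2n-1}|\le 2(1-q)/(1-q^{2n-1})$; the root test (as at the start of the proof of Theorem~\ref{thm2-1}) then shows that the radius of convergence of $\sum_n A_n z^n$ is at least one. Next, using (\ref{eq2-1}) together with the vanishing of all even-indexed coefficients of $f$, a direct expansion yields
$$(1-z^2)(D_qf)(z) = 1 + (B_3-1)z^2 + \sum_{n=2}^\infty (B_{2n+1}-B_{2n-1})z^{2n} = 1 + \sum_{n=1}^\infty (B_{2n+1}-B_{2n-1})z^{2n},$$
where in the last equality $B_1=1$ is used to absorb the $(B_3-1)z^2$ term into the sum.

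The remaining step is to estimate, using the triangle inequality, $|z|<1$, and the hypothesis,
$$\left|(1-z^2)(D_qf)(z)-\frac{1}{1-q}\right| \le \frac{q}{1-q}+\sum_{n=1}^\infty |B_{2n+1}-B_{2n-1}|\,|z|^{2n} \le \frac{q}{1-q}+1=\frac{1}{1-q}$$
for every $z\in\D$, which is exactly the inequality required in Definition~\ref{def2}. A routine computation ($\real((1+z^2)/(1-z^2))>0$ on $\D$) confirms $g(z)=z/(1-z^2)\in\es^*$, so $f\in\K_q$ with this $g$. No real obstacle arises: the hypothesis is tailor-made to match the telescoping produced by the factor $1-z^2$ acting on the even-powered expansion of $(D_qf)(z)$, and everything beyond that is bookkeeping parallel to Theorem~\ref{thm2-1}.
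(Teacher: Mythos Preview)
Your argument is correct and follows essentially the same route as the paper: the paper also first bounds $|B_{2n+1}|\le 2$ to get $f\in\A$, then expands $(1-z^2)(D_qf)(z)=1-\sum_{n\ge1}(B_{2n-1}-B_{2n+1})z^{2n}$ and applies the triangle inequality to obtain the Definition~\ref{def2} estimate. The only cosmetic difference is that the paper writes the final inequality as $\frac{1}{1-q}-\bigl|(1-z^2)(D_qf)(z)-\frac{1}{1-q}\bigr|\ge 1-\sum_{n\ge1}|B_{2n-1}-B_{2n+1}|\ge 0$, which is equivalent to your formulation.
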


\begin{proof}
First of all we shall prove that $f(z)=z+\sum_{n=2}^\infty A_{2n-1} z^{2n-1}\in \A$.
%that is to prove that the radius of convergence of $f(z)$ is not less than unity.
For this, we estimate
%\begin{align*}
$$|B_{2n+1}|  =\Big|\sum_{k=1}^{n}(B_{2k-1}-B_{2k+1})-1\Big|
 %\leq \sum_{k=1}^\infty |B_{2k-1}-B_{2k+1}|+1
 \leq 2
 $$
%\\
%& \leq \sum_{k=1}^\infty |B_{2k-1}-B_{2k+1}|+1\\
%& \leq 1+1=2
%\end{align*}
so that
%$$\left|\frac{A_n(1-q^n)}{1-q}\right|\leq 2 \quad \mbox{for all $n\ge 1$} 
%$$
%and so
$ \displaystyle |A_n|\leq 2/(1+q+\cdots q^{n-1}).
%\displaystyle |A_n|\leq \frac{2}{1+q+\cdots q^{n-1}}.
$
By applying the root test, one can see that the radius of convergence of the series
expansion of $f(z)$ 
%$z+\sum_{n=2}^\infty A_{2n-1} z^{2n-1}$ 
is not less than unity. Therefore, $f\in \A$.

Since $f(z)=z+\sum_{n=2}^\infty A_{2n-1} z^{2n-1}$, by (\ref{q-operator}) we get
\begin{align*}
(1-z^2)(D_q f)(z)
%& = (1-z^2)\left[1+\sum_{n=2}^\infty \frac{A_{2n-1}(1-q^{2n-1})}{1-q}z^{2n-2}\right]\\
& =1 - \sum_{n=1}^\infty \left[\frac{A_{2n-1}({1-q^{2n-1}})}{1-q}-
\frac{A_{2n+1}(1-q^{2n+1})}{1-q}\right] z^{2n} .
\end{align*}
Note that $B_n=A_n(1-q^n)/(1-q)$. So, we have
\begin{align*}
  \frac{1}{1-q}-\left|(1-z^2)(D_q f)(z)-\frac{1}{1-q}\right|
%& = \frac{1}{1-q}-\left|1-\sum_{n=1}^\infty (B_{2n-1}-B_{2n+1}) z^{2n}-\frac{1}{1-q}\right|\\
& \geq 1-\sum_{n=1}^\infty |B_{2n-1}-B_{2n+1}| \geq 0,
%\\
%& \geq 0
\end{align*}
whenever $\sum_{n=1}^\infty |B_{2n-1}-B_{2n+1}|\leq 1$. This proves the conclusion
of our theorem.
\end{proof}
By Theorem~\ref{thm2-2}, we immediately have the following result which generalizes a 
result of MacGregor (see \cite[Theorem~2]{Mac69}).

\begin{theorem}\label{lem2-2.1}
Let $\{A_n\}$ be a sequence of real numbers such that
$ B_n=A_n(1-q^n)/(1-q)
%B_n=\frac{A_n(1-q^n)}{1-q}.
$ for all $n\geq 1.$
Suppose that
$$1\geq B_3\geq B_5 \geq \cdots \geq B_{2n-1} \geq \cdots \geq 0
~~\mbox{ or }~~
1\leq B_3\leq B_5 \leq \cdots \leq B_{2n-1} \leq \cdots \leq 2.
$$ 
Then $f(z)=z+\sum_{n=2}^\infty A_{2n-1}z^{2n-1} \in \K_q$ with $g(z)=z/(1-z^2)$.
\end{theorem}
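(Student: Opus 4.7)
The plan is to deduce Theorem~\ref{lem2-2.1} directly from Theorem~\ref{thm2-2} by showing the monotonicity hypothesis in either case forces the summability condition $\sum_{n=1}^\infty |B_{2n-1}-B_{2n+1}| \le 1$.

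The key preliminary observation is that $B_1 = A_1(1-q)/(1-q) = 1$, since $A_1 = 1$ by the normalization $f(0) = 0 = f'(0)-1$. This anchors both monotone chains at the correct starting value, which is exactly what makes the telescoping argument below produce the required bound of $1$.

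First I would treat the decreasing case $1 \ge B_3 \ge B_5 \ge \cdots \ge 0$. Since $B_1 = 1 \ge B_3$, the full sequence $B_1, B_3, B_5, \ldots$ is nonincreasing and bounded below by $0$, so each difference $B_{2n-1}-B_{2n+1}$ is nonnegative and the partial sums telescope:
\[
\sum_{n=1}^{N}|B_{2n-1}-B_{2n+1}| = \sum_{n=1}^{N}(B_{2n-1}-B_{2n+1}) = B_1 - B_{2N+1} \le B_1 = 1.
\]
Passing to the limit $N\to\infty$ gives the required bound. The increasing case $1 \le B_3 \le B_5 \le \cdots \le 2$ is symmetric: together with $B_1 = 1$ the sequence is nondecreasing and bounded above by $2$, so now each $|B_{2n-1}-B_{2n+1}| = B_{2n+1}-B_{2n-1}$ and the same telescoping yields $\lim_{N\to\infty}(B_{2N+1}-B_1) \le 2-1 = 1$. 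In either case the hypothesis of Theorem~\ref{thm2-2} is satisfied, and the conclusion $f \in \K_q$ with $g(z) = z/(1-z^2)$ follows immediately.

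There is essentially no obstacle here; the only subtle point is recognizing that the normalization forces $B_1 = 1$ so that the monotone block starting at $B_3$ automatically extends to a monotone block starting at $B_1$ with the correct endpoint. Once this is noticed, both cases reduce to a one-line telescoping estimate, so the proof is short and direct, exactly paralleling the deduction of Theorem~\ref{thm1.1} from Lemma~\ref{thm1} and of Theorem~\ref{thm2-1.1} from Theorem~\ref{thm2-1}.
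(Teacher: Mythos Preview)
Your proof is correct and follows exactly the approach the paper intends: the paper simply states that Theorem~\ref{lem2-2.1} follows immediately from Theorem~\ref{thm2-2}, and your telescoping argument (using $B_1=1$) is precisely the parallel of the proofs of Theorems~\ref{thm1.1} and~\ref{thm2-3.1}.
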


\begin{lemma}\label{thm2-3}\cite[Lemma~1.1(4)]{RS12}
Let $f$ be defined by $(\ref{eq2})$ and suppose that 
$$\sum_{n=2}^\infty |B_{n}-B_{n-2}|\leq 1, \quad B_n=\frac{A_n(1-q^n)}{(1-q)}.
% B_n=\frac{A_n(1-q^n)}{1-q} .
$$
Then $f\in \K_q$ with $g(z)=z/(1-z^2)$.
\end{lemma}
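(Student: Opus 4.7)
The goal is to verify that $|(1-z^2)(D_qf)(z)-1/(1-q)|\le 1/(1-q)$ for $z\in\D$, since $g(z)=z/(1-z^2)$ gives $z/g(z)=1-z^2$. My plan is to mirror the strategy used for Theorems~\ref{thm2-1} and~\ref{thm2-2}.

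First I would check that $f\in\A$. Setting $B_0=0$ (compatible with $A_0=0$) and $B_1=1$, the relation $B_n=B_{n-2}+(B_n-B_{n-2})$, telescoped separately along the even and odd subsequences, gives $|B_{2m}|\le\sum_{k=1}^{m}|B_{2k}-B_{2k-2}|\le 1$ and $|B_{2m+1}|\le 1+\sum_{k=1}^{m}|B_{2k+1}-B_{2k-1}|\le 2$, so $|B_n|\le 2$ for every $n$. Hence $|A_n|\le 2(1-q)/(1-q^n)$, and the root test yields radius of convergence at least one.

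Next I would expand $(1-z^2)(D_qf)(z)$ using~(\ref{eq2-1}). Writing the formula as $(D_qf)(z)=\sum_{n=1}^{\infty}B_nz^{n-1}$ and shifting indices gives
\[
(1-z^2)(D_qf)(z)=1+\sum_{n=2}^{\infty}(B_n-B_{n-2})z^{n-1},
\]
where the convention $B_0=0$ cleanly absorbs the $n=2$ term. The hypothesis together with $|z^{n-1}|<1$ and the triangle inequality then yields $|(1-z^2)(D_qf)(z)-1|\le \sum_{n=2}^{\infty}|B_n-B_{n-2}|\le 1$ on $\D$.

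Finally, the required inequality reduces to the elementary geometric observation that the closed disk $|w-1|\le 1$ lies inside the closed disk $|w-1/(1-q)|\le 1/(1-q)$: indeed $|w-1/(1-q)|\le |w-1|+q/(1-q)\le 1+q/(1-q)=1/(1-q)$. I do not anticipate any genuine obstacle; the only new feature compared with Theorems~\ref{thm2-1} and~\ref{thm2-2} is that the increments $B_n-B_{n-2}$ couple coefficients of opposite parity, which is handled by the two-parity telescoping above and by declaring $B_0=0$.
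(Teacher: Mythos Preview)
Your proof is correct. Note, however, that the paper does not supply its own proof of this lemma: it is quoted verbatim from \cite[Lemma~1.1(4)]{RS12} and stated without argument. What you have written is precisely the natural proof one obtains by transplanting the template of Theorems~\ref{thm2-1} and~\ref{thm2-2} to the starlike function $g(z)=z/(1-z^2)$ acting on a full (not odd-only) power series; in particular your expansion $(1-z^2)(D_qf)(z)=1+\sum_{n\ge 2}(B_n-B_{n-2})z^{n-1}$ and the final disk inclusion $\{|w-1|\le 1\}\subset\{|w-1/(1-q)|\le 1/(1-q)\}$ match exactly the computations the paper carries out in those neighbouring results.
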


%\begin{proof}
%If $f$ is in the form (\ref{eq2}), then by using (\ref{eq2-1}) we get
%$$(1-z^2)(D_q f)(z)
%=1+\frac{A_2(1-q^2)}{1-q}z + \sum_{n=3}^\infty 
%\left[\frac{A_n({1-q^n})}{1-q}-\frac{A_{n-2}(1-q^{n-2})}{1-q}\right] z^{n-1} .
%$$
%
%Since $B_n=A_n(1-q^n)/(1-q)$, then we have
%\begin{align*}
% \frac{1}{1-q}-\left|(1-z^2)(D_q f)(z)-\frac{1}{1-q}\right|
%& = \frac{1}{1-q}-\left|1+B_2 z+\sum_{n=3}^\infty (B_n-B_{n-2})z^{n-1}-\frac{1}{1-q}\right|\\
%& \geq 1-|B_2|-\sum_{n=3}^\infty |B_n-B_{n-2}| \geq 0
%%\\
%%& \geq 0
%\end{align*}
%whenever $\sum_{n=2}^\infty |B_n-B_{n-2}|\leq 1$.
%\end{proof}
Lemma~\ref{thm2-3} leads the following sufficient conditions for functions to be in $\K_q$.
\begin{theorem}\label{thm2-3.1}
Let $\{A_n\}$ be a sequence of real numbers such that 
$$A_1=1 ~\mbox{ and }~ 
B_n=\frac{A_n(1-q^n)}{1-q}
$$ 
for all $n\geq 1.$ Suppose that
$$ 1\geq B_1+B_2\geq \cdots \geq B_{n-1}+B_n\geq \cdots \geq 0
~\mbox{ or }~
1\leq B_1+B_2\leq \cdots \leq B_{n-1}+B_n\leq \cdots \leq 2.
$$
Then $ f(z)=z+\sum_{n=2}^\infty A_nz^n \in \K_q$ with $g(z)=z/(1-z^2)$.
\end{theorem}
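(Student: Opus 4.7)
The plan is to reduce the theorem to Lemma~\ref{thm2-3}, so it suffices to verify that either monotonicity hypothesis forces
$$\sum_{n=2}^\infty |B_n-B_{n-2}|\le 1.$$
The natural device is to introduce the auxiliary sequence $C_n:=B_{n-1}+B_n$ for $n\ge 2$, because then the telescoping identity
$$C_n-C_{n-1}=(B_{n-1}+B_n)-(B_{n-2}+B_{n-1})=B_n-B_{n-2}$$
holds for every $n\ge 3$. This is exactly the identity that turns the hypothesis (which concerns the $C_n$) into a bound on the sum appearing in Lemma~\ref{thm2-3}.

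Next I would handle the single ``boundary'' term $|B_2-B_0|$ separately. Because $A_0=0$ and $A_1=1$ we have $B_0=0$ and $B_1=1$, so $C_2=1+B_2$ and hence $|B_2-B_0|=|B_2|=|C_2-1|$. The remaining part $\sum_{n\ge 3}|B_n-B_{n-2}|=\sum_{n\ge 3}|C_n-C_{n-1}|$ is, under monotonicity of $\{C_n\}_{n\ge 2}$, a telescoping series equal to $|C_2-C_\infty|$, where $C_\infty:=\lim_{n\to\infty}C_n$ exists (monotone, bounded).

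I would then split into the two cases of the hypothesis. In the decreasing case $1\ge C_2\ge C_3\ge\cdots\ge 0$ one has $B_2=C_2-1\le 0$, so $|C_2-1|=1-C_2$, and the telescoping piece satisfies $C_2-C_\infty\le C_2$. Adding the two contributions gives at most $(1-C_2)+C_2=1$. In the increasing case $1\le C_2\le C_3\le\cdots\le 2$ one has $|C_2-1|=C_2-1$ and the telescoping piece equals $C_\infty-C_2\le 2-C_2$; the sum is again at most $(C_2-1)+(2-C_2)=1$. In both situations the hypothesis of Lemma~\ref{thm2-3} is met, and the conclusion $f\in\K_q$ with $g(z)=z/(1-z^2)$ follows at once.

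The only delicate point is the bookkeeping at $n=2$, where $B_0$ is not part of the $B_n$-sequence in the theorem's hypothesis; once one observes that $B_0=0$ and $B_1=1$ make $C_2-1=B_2-B_0$, the proof is an entirely routine telescoping estimate. I do not expect any genuine obstacle beyond handling this boundary term correctly and recording the limit of the monotone sequence $\{C_n\}$.
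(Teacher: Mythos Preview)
Your proposal is correct and follows essentially the same route as the paper: both reduce to Lemma~\ref{thm2-3} by showing $\sum_{n\ge 2}|B_n-B_{n-2}|\le 1$ via the telescoping identity $B_n-B_{n-2}=(B_{n-1}+B_n)-(B_{n-2}+B_{n-1})$. The paper writes the partial sum directly as $1-B_{k-1}-B_k$ (resp.\ $B_{k-1}+B_k-1$) and bounds it by~$1$; your version with $C_n=B_{n-1}+B_n$, the explicit treatment of the boundary term $|B_2-B_0|=|C_2-1|$, and the limit $C_\infty$ is simply a more carefully bookkept rendering of the same computation.
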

\begin{proof}
We know that 
$$ \sum_{n=2}^\infty |B_{n}-B_{n-2}|=\lim_{k\rightarrow \infty}\sum_{n=2}^k |B_{n}-B_{n-2}|.
$$
If $1\geq B_1+B_2\geq \cdots \geq B_{n-1}+B_n\geq \cdots \geq 0,$
we see that 
$$\lim_{k\rightarrow \infty}\sum_{n=2}^k |B_{n}-B_{n-2}|= \lim_{k\rightarrow \infty}(1-B_{k-1}-B_k)\leq 1+0=1.
$$
Similarly, if
$ 1\leq B_1+B_2\leq \cdots \leq B_{n-1}+B_n\leq \cdots \leq 2
$
then
%$$ \lim_{k\rightarrow \infty}\sum_{n=2}^k |B_{n}-B_{n-2}|= \lim_{k\rightarrow \infty}(-1+B_k+B_{k-1}) \leq 1.
%$$
%Since, 
$\sum_{n=2}^\infty |B_{n}-B_{n-2}|\leq 1$. Thus, by Theorem~\ref{thm2-3}, we complete the proof.
\end{proof}
As a consequence of Theorem~\ref{thm2-3.1}, one can obtain
the following new criteria for functions to be in the close-to-convex
family.

\begin{theorem}
Let $\{a_n\}$ be a sequence of real numbers such that
$a_1=1 ~\mbox{ and }~ b_n=na_n
$
for all $n\geq 1.$
Suppose that
$$ 1\geq b_1+b_2\geq \cdots \geq b_{n-1}+b_n\geq \cdots \geq 0
~~\mbox{ or }~~
1\leq b_1+b_2\leq \cdots \leq b_{n-1}+b_n\leq \cdots \leq 2.
$$
Then $f(z)=z+\sum_{n=2}^\infty a_nz^n$ is close-to-convex
with $g(z)=z/(1-z^2)$. 
\end{theorem}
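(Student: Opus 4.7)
My plan is to verify directly the classical characterization of close-to-convexity for $f$ with $g(z) = z/(1-z^2)$, namely $\real\bigl((1-z^2)f'(z)\bigr) > 0$ for every $z \in \D$. This will be the $q \to 1^-$ analogue of the argument used for Theorem~\ref{thm2-3.1}: the $q$-difference operator $(D_q f)(z)$ will be replaced by the ordinary derivative $f'(z)$, and the quantities $B_n = A_n(1-q^n)/(1-q)$ by their limits $b_n = n a_n$.

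The core computation will be to expand $(1-z^2)f'(z)$. Writing $f'(z) = \sum_{n \geq 1} b_n z^{n-1}$ with $b_1 = 1$ and setting $b_0 = 0$, one obtains
\begin{equation*}
(1-z^2)f'(z) = 1 + \sum_{n \geq 2}(b_n - b_{n-2})\, z^{n-1}.
\end{equation*}
The next key step is the algebraic identity $b_n - b_{n-2} = c_n - c_{n-1}$, where $c_n := b_{n-1} + b_n$ and we put $c_1 := 1$ by convention (so that $c_2 = 1 + b_2$). This identity, already present implicitly in the proof of Theorem~\ref{thm2-3.1}, yields
\begin{equation*}
(1-z^2)f'(z) - 1 = \sum_{n \geq 2}(c_n - c_{n-1})\, z^{n-1}.
\end{equation*}

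With this expansion in hand, I would run the telescoping estimate in each of the two monotone cases. In the decreasing case $1 = c_1 \geq c_2 \geq \cdots \geq 0$,
\begin{equation*}
\bigl|(1-z^2)f'(z) - 1\bigr| \leq \sum_{n \geq 2}(c_{n-1} - c_n)\, |z|^{n-1} < c_1 - \lim_{n \to \infty} c_n \leq 1
\end{equation*}
for every $z \in \D$, with the strict inequality coming from $|z| < 1$ (the degenerate situation in which all differences vanish gives $(1-z^2)f'(z) \equiv 1$, for which the real part is trivially positive). The increasing case $1 \leq c_2 \leq \cdots \leq 2$ is symmetric, with each $|c_n - c_{n-1}| = c_n - c_{n-1}$ and the sum telescoping to $\lim c_n - c_1 \leq 1$. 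In either case $\real\bigl((1-z^2)f'(z)\bigr) > 0$ throughout $\D$, which will place $f$ in $\K$ with $g(z) = z/(1-z^2)$.

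One preliminary point to handle is that $f \in \A$, so that $f'(z)$ is legitimately analytic on $\D$. This will follow since the monotone chains bound $|c_n|$ by an absolute constant, and the recursion $b_n = c_n - b_{n-1}$ with $|b_1| = 1$ yields $|b_n| = O(n)$ by induction; hence $|a_n| = O(1)$ and the radius of convergence is at least one. I do not expect any substantial obstacle; the only point requiring minor care is the strict positivity of the real part (as opposed to mere non-negativity), which is ensured by the strict inequality $|z| < 1$ on the open disk.
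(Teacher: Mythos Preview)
Your proposal is correct and follows essentially the same route the paper intends: the paper derives this result as the $q\to 1^{-}$ specialization of Theorem~\ref{thm2-3.1} (itself resting on Lemma~\ref{thm2-3}), and your direct computation of $(1-z^2)f'(z)$ together with the telescoping identity $b_n-b_{n-2}=c_n-c_{n-1}$ is precisely that argument with $D_qf$ replaced by $f'$ and $B_n$ by $b_n=na_n$. Your explicit handling of analyticity and of the strict inequality on the open disk is a welcome addition over what the paper spells out.
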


\begin{lemma}\label{thm3}\cite[Lemma~1.1(2)]{RS12}
Let $f$ be defined by $(\ref{eq2})$ and suppose that 
$$\sum_{n=1}^\infty |B_{n-1}-B_n+B_{n+1}|\leq 1, \quad
B_n=\frac{A_n(1-q^n)}{1-q}.
$$
Then $f\in \K_q$ with $g(z)=z/(1-z+z^2)$.
\end{lemma}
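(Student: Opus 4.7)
The plan is to establish two things: (i) that $f\in\A$, so the series representation is a bona fide analytic function on $\D$, and (ii) the defining inequality of $\K_q$ with the specific choice $g(z)=z/(1-z+z^2)$, which is one of the nine starlike rational functions listed in the introduction (so $g\in\es^*$ is automatic). Since $z/g(z)=1-z+z^2$, step (ii) reduces to verifying that
\[
\left|(1-z+z^2)(D_q f)(z)-\frac{1}{1-q}\right|\le \frac{1}{1-q},\quad z\in\D.
\]

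For the analyticity step, I would set $B_0=0$ (consistent with $A_0=0$) and note that $B_1=1$. Writing $C_n:=B_{n-1}-B_n+B_{n+1}$, the hypothesis becomes $\sum_{n=1}^{\infty}|C_n|\le 1$. Observe that $C_n$ is exactly the second difference $(B_{n+1}-B_n)-(B_n-B_{n-1})$, so summing telescopically gives
\[
B_{n+1}-B_n=(B_1-B_0)+\sum_{k=1}^{n}C_k=1+\sum_{k=1}^{n}C_k,
\]
whence $|B_{n+1}-B_n|\le 2$ and iteratively $|B_n|\le 2n-1$. This yields $|A_n|\le (2n-1)/(1+q+\cdots+q^{n-1})$, and the root test confirms that the radius of convergence of $\sum A_n z^n$ is at least one, so $f\in\A$.

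For the main computation, I would substitute the expansion (\ref{eq2-1}) into $(1-z+z^2)(D_q f)(z)$, multiply term by term, and collect like powers. A short reindexing, using the conventions $B_0=0$ and $B_1=1$, shows that the constant term equals $1$ while the coefficient of $z^m$ for $m\ge 1$ equals $B_{m-1}-B_m+B_{m+1}=C_m$. Therefore
\[
(1-z+z^2)(D_q f)(z)=1+\sum_{m=1}^{\infty}C_m z^m,
\]
and the triangle inequality yields
\[
\left|(1-z+z^2)(D_q f)(z)-\frac{1}{1-q}\right|\le \left|1-\frac{1}{1-q}\right|+\sum_{m=1}^{\infty}|C_m|\le \frac{q}{1-q}+1=\frac{1}{1-q},
\]
which is precisely the $\K_q$ condition with respect to $g$.

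The delicate step is the analyticity bound: in contrast with the proof of Theorem~\ref{thm2-1}, the hypothesis now controls a three-term combination rather than a consecutive difference, so one cannot read off a bound on $|B_n|$ directly. The telescoping identity for second differences above is what converts the three-term condition into a usable linear growth bound on $|B_n|$. Once this is in place, the coefficient matching and final triangle-inequality step are essentially the same as in Theorems~\ref{thm2-1} and \ref{thm2-2}.
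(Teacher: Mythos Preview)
Your main computation (step~(ii)) is correct and follows exactly the pattern the paper uses for its analogous results (Theorems~\ref{thm2-1} and~\ref{thm2-2}); note that the paper itself does not supply a proof of this lemma but merely cites \cite[Lemma~1.1(2)]{RS12}. The identity $(1-z+z^2)(D_qf)(z)=1+\sum_{m\ge 1}C_m z^m$ with $C_m=B_{m-1}-B_m+B_{m+1}$ is right, and the final triangle-inequality step is clean.

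The analyticity argument (step~(i)), however, contains a genuine slip. You write that ``$C_n$ is exactly the second difference $(B_{n+1}-B_n)-(B_n-B_{n-1})$'', but
\[
(B_{n+1}-B_n)-(B_n-B_{n-1})=B_{n+1}-2B_n+B_{n-1}\neq B_{n+1}-B_n+B_{n-1}=C_n.
\]
Consequently the telescoping identity $B_{n+1}-B_n=1+\sum_{k=1}^{n}C_k$ is false (already for $n=1$ it would give $B_2-1=B_2$). So the bound $|B_n|\le 2n-1$ is not justified by your argument. A quick repair: the recurrence $B_{n+1}=B_n-B_{n-1}+C_n$ has characteristic roots $e^{\pm i\pi/3}$ of modulus~$1$; factoring, with $\omega=e^{i\pi/3}$ and $D_n:=B_{n+1}-\omega B_n$, one gets $D_n=\bar\omega D_{n-1}+C_n$, hence $|D_n|\le |D_0|+\sum_{k\le n}|C_k|\le 2$, and then $|B_n|\le\sum_{k<n}|D_k|\le 2n$. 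Alternatively (and more in the spirit of the series manipulation you already did), observe that $1-z+z^2$ has its zeros on $|z|=1$, so $1/(1-z+z^2)$ is analytic in $\D$, and since $\sum|C_m|\le 1$ forces $1+\sum C_m z^m$ to be analytic in $\D$, their product $(D_qf)(z)$ is analytic in $\D$; this immediately gives $\limsup|B_n|^{1/n}\le 1$ and hence $f\in\A$.
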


%\begin{proof}
%Using  (\ref{eq2}) and (\ref{eq2-1}) we get
%$$(1-z+z^2)(D_q f)(z)
%=1+ \sum_{n=1}^\infty 
%\left[\frac{A_{n-1}({1-q^{n-1}})}{1-q}-\frac{A_n({1-q^n})}{1-q}+\frac{A_{n+1}(1-q^{n+1})}{1-q}\right] z^n .
% $$
%Since $B_n=A_n(1-q^n)/(1-q)$, we have 
%\begin{align*}
% & \hspace*{-1.5cm}\frac{1}{1-q}-\left|(1-z+z^2)(D_q f)(z)-\frac{1}{1-q}\right|\\
%& = \frac{1}{1-q}-\left|1+\sum_{n=1}^\infty (B_{n-1}-B_n+B_{n+1})z^{n}-\frac{1}{1-q}\right|\\
%& \geq 1-\sum_{n=1}^\infty |B_{n+1}-B_n+B_{n+1}| \geq 0,
%%\\
%%& \geq 0,
%\end{align*}
%if 
%$\sum_{n=2}^\infty |B_{n-1}-B_n+B_{n+1}|\leq 1.
%$
%This proves the result.
%\end{proof}
Lemma~\ref{thm3} yields the following sufficient condition. 
\begin{theorem}\label{thm3.1}
Let $\{A_n\}$ be a sequence of real numbers such that
$$A_1=1 ~\mbox{ and }~ 
B_n=\frac{A_n(1-q^n)}{1-q}
$$
for all $n\geq 1.$
Suppose that
$$ 0\geq B_2-B_1\geq B_3\geq B_2+B_4\geq B_2+B_3+B_5\geq \cdots \geq B_2+B_3+B_4+\cdots +B_{n-1}+B_{n+1}\geq -1
$$
 or
$$ 0\leq B_2-B_1\leq B_3\leq B_2+B_4\leq B_2+B_3+B_5\leq \cdots \leq B_2+B_3+B_4+\cdots +B_{n-1}+B_{n+1}\leq 1
$$ 
holds.
Then $ f(z)=z+\sum_{n=2}^\infty A_nz^n \in \K_q$ with $g(z)=z/(1-z+z^2)$.
\end{theorem}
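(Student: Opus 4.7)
The plan is to reduce the theorem directly to Lemma~\ref{thm3}, which guarantees $f\in\K_q$ with $g(z)=z/(1-z+z^2)$ as soon as $\sum_{n=1}^\infty |B_{n-1}-B_n+B_{n+1}|\leq 1$ (with the convention $B_0=0$, and noting that $A_1=1$ forces $B_1=1$). The whole strategy is to recognize the chain of inequalities in the hypothesis as a monotone sequence whose consecutive differences are precisely the terms appearing in that lemma, and then to telescope, just as in the proofs of Theorems~\ref{thm1.1}, \ref{thm2-1.1}, \ref{lem2-2.1}, and \ref{thm2-3.1}.

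First I would introduce the auxiliary quantities $T_1 = B_2 - B_1$, $T_2 = B_3$, and $T_n = B_2 + B_3 + \cdots + B_{n-1} + B_{n+1}$ for $n\geq 3$, so that the two alternative hypotheses read either $0 \geq T_1 \geq T_2 \geq \cdots \geq T_n \geq \cdots \geq -1$ or $0 \leq T_1 \leq T_2 \leq \cdots \leq T_n \leq \cdots \leq 1$. A direct computation, separating the small cases $n=1,2$ and using $B_0=0$ and $B_1=1$, shows that $T_1 = B_0-B_1+B_2$ and $T_n - T_{n-1} = B_{n-1} - B_n + B_{n+1}$ for every $n \geq 2$. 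Consequently, the partial sum appearing in Lemma~\ref{thm3} admits the closed form $\sum_{n=1}^{N} |B_{n-1}-B_n+B_{n+1}| = |T_1| + \sum_{n=2}^{N} |T_n - T_{n-1}|$.

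Now I would handle the two cases separately by telescoping. In the nonincreasing case each $|T_n - T_{n-1}|$ equals $T_{n-1}-T_n$ and $|T_1|=-T_1$, so the partial sum collapses to $-T_1 + (T_1 - T_N) = -T_N \leq 1$, using the lower bound $T_N \geq -1$. In the nondecreasing case every $|T_n-T_{n-1}|$ equals $T_n-T_{n-1}$ and $|T_1|=T_1$, and the same telescoping yields the partial sum $T_1 + (T_N - T_1) = T_N \leq 1$. Passing to the limit $N\to\infty$ gives $\sum_{n=1}^\infty |B_{n-1}-B_n+B_{n+1}| \leq 1$, and an application of Lemma~\ref{thm3} completes the argument.

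The only delicate point is the bookkeeping for the initial indices: the general formula $T_n = B_2+\cdots+B_{n-1}+B_{n+1}$ is only valid from $n=3$ onward, so the identifications $T_1 = B_0-B_1+B_2$ and $T_2 - T_1 = B_1-B_2+B_3$ must be verified by hand before the telescoping identity can be invoked uniformly. Once this initial verification is in place, the rest of the proof is a routine telescoping argument with no analytical subtleties.
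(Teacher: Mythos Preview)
Your proposal is correct and follows essentially the same approach as the paper: both reduce to Lemma~\ref{thm3} by recognizing that the consecutive differences of the chain in the hypothesis are exactly $B_{n-1}-B_n+B_{n+1}$, and then telescoping in each monotone case to obtain $\sum_{n=1}^\infty |B_{n-1}-B_n+B_{n+1}|\le 1$. Your version is simply more explicit, introducing the auxiliary sequence $T_n$ and verifying the initial indices by hand, whereas the paper states the telescoped limit directly.
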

\begin{proof}
We know that 
$$ \sum_{n=1}^\infty |B_{n-1}-B_n+B_{n+1}|=\lim_{k\rightarrow \infty}\sum_{n=1}^k |B_{n-1}-B_n+B_{n+1}|.
$$
If 
$$ 0\geq B_2-B_1\geq B_3\geq B_2+B_4\geq B_2+B_3+B_5\geq \cdots \geq B_2+B_3+B_4+\cdots +B_{n-1}+B_{n+1}\geq -1,
$$
we see that 
$$\lim_{k\rightarrow \infty}\sum_{n=1}^k |B_{n-1}-B_n+B_{n+1}|
= \lim_{k\rightarrow \infty}-(B_2+B_3+B_4+\cdots +B_{k-1}+B_{k+1})\leq 1.
$$
Similarly, if
$$ 0\leq B_2-B_1\leq B_3\leq B_2+B_4\leq B_2+B_3+B_5\leq \cdots \leq B_2+B_3+B_4+\cdots +B_{n-1}+B_{n+1}\leq 1
$$ 
then one can obtain
%$$ \lim_{k\rightarrow \infty}\sum_{n=1}^k |B_{n-1}-B_n+B_{n+1}|
%= \lim_{k\rightarrow \infty}(B_2+B_3+B_4+\cdots +B_{k-1}+B_{k+1})\leq 1.
%$$
%Since 
$\sum_{n=1}^\infty |B_{n-1}-B_n+B_{n+1}|\leq 1$. Thus, by Theorem~\ref{thm3},
we complete the proof.
\end{proof}

As a result of Theorem~\ref{thm3.1}, one can obtain
the following new criteria for functions to be in the close-to-convex
family.

\begin{theorem}
Let $\{a_n\}$ be a sequence of real numbers such that
$a_1=1 ~\mbox{ and }~ b_n=na_n,
$
for all $n\geq 1.$
Suppose that
$$ 0\geq b_2-b_1\geq b_3\geq b_2+b_4\geq b_2+b_3+b_5\geq \cdots \geq b_2+b_3+b_4+\cdots +b_{n-1}+b_{n+1}\geq -1
$$
 or
$$ 0\leq b_2-b_1\leq b_3\leq b_2+b_4\leq b_2+b_3+b_5\leq \cdots \leq b_2+b_3+b_4+\cdots +b_{n-1}+b_{n+1}\leq 1.
$$ 
 Then $\displaystyle f(z)=z+\sum_{n=2}^\infty a_nz^n$ is in the close-to-convex
 family with respect to $g(z)=z/(1-z+z^2)$.
\end{theorem}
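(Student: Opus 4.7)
The plan is to prove this classical criterion directly, by running the argument behind Theorem~\ref{thm3.1} in the $q\to 1^{-}$ limit. In this limit the $q$-difference operator $D_q$ reduces to the ordinary derivative $f'$, and $B_n = A_n(1-q^n)/(1-q)$ specializes to $b_n=na_n$; the defining disk inequality for $\K_q$ collapses to the standard close-to-convex condition $\real(zf'(z)/g(z))>0$.

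Since $g(z)=z/(1-z+z^2)\in\es^{*}$, close-to-convexity of $f$ with respect to $g$ is equivalent to $\real\bigl((1-z+z^2)f'(z)\bigr)>0$ on $\D$. Writing $f'(z)=\sum_{n=1}^{\infty} b_n z^{n-1}$ with $b_0:=0$ and $b_1=1$, a direct expansion gives
$$
(1-z+z^2)f'(z) = 1 + \sum_{n=1}^{\infty} (b_{n-1}-b_n+b_{n+1})\,z^n,
$$
so a sufficient condition for the real part above to be positive on $\D$ is the coefficient estimate $\sum_{n=1}^{\infty}|b_{n-1}-b_n+b_{n+1}|\le 1$.

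Next I would verify this estimate under either monotonicity hypothesis by telescoping, exactly as in the proof of Theorem~\ref{thm3.1}. Setting $S_k:=\sum_{n=1}^{k}(b_{n-1}-b_n+b_{n+1})$, a short computation gives $S_1=b_2-b_1$, $S_2=b_3$, and $S_k=b_2+b_3+\cdots+b_{k-1}+b_{k+1}$ for $k\ge 2$. These are precisely the quantities listed in the chain of inequalities of the hypothesis; either alternative therefore forces $\{S_k\}$ to be monotone and trapped in $[-1,0]$ (respectively in $[0,1]$), so the summands $b_{n-1}-b_n+b_{n+1}$ have constant sign and $\sum_{n=1}^{\infty}|b_{n-1}-b_n+b_{n+1}| = |\lim_k S_k|\le 1$.

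The only step requiring a little care beyond a verbatim translation of Theorem~\ref{thm3.1} is verifying $f\in\A$, i.e.\ that the series for $f$ has radius of convergence at least $1$. This will follow from the boundedness of $\{b_n\}$: the summability of $\{b_{n-1}-b_n+b_{n+1}\}$ together with the fact that the homogeneous recurrence $b_{n+1}=b_n-b_{n-1}$ has characteristic roots $e^{\pm i\pi/3}$ on the unit circle forces $\{b_n\}$ to remain bounded, so $|a_n|=O(1/n)$ and $f\in\A$. Modulo this standard recurrence estimate, the proof is essentially the $q=1$ specialization of Theorem~\ref{thm3.1}, and no essential obstacle is anticipated.
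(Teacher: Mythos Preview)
Your proposal is correct and follows essentially the same route as the paper: the paper states this theorem as the $q\to 1^{-}$ specialization of Theorem~\ref{thm3.1} (itself proved via Lemma~\ref{thm3} and the same telescoping identity $S_k=b_2+\cdots+b_{k-1}+b_{k+1}$ that you compute), and your argument is precisely that specialization written out in the classical language $\real\bigl((1-z+z^2)f'(z)\bigr)>0$. Your extra care about $f\in\A$ via boundedness of $\{b_n\}$ from the recurrence $b_{n+1}=b_n-b_{n-1}+c_n$ with $\sum|c_n|\le 1$ and bounded fundamental solutions is a valid refinement that the paper does not spell out for this particular corollary.
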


\section{The Bieberbach-de Branges Theorem for $\K_q$}

A necessary and sufficient condition for functions to be in
$\es_q^*$ was obtained in \cite[Theorem~1.5]{IMS90} in
the following form: 
%\begin{lemma}\label{Merkes:thm}
{\em a function $f\in \es_q^*$ if and only if $|f(qz)/f(z)|\leq 1$ for all $z\in \D$.}
%\end{lemma}

% We next bring a similar investigation for the class $\K_q$ into the context.
% For this, we first need to
A similar characterization for functions in $\K_q$ is

\begin{lemma}\label{new-lem1}
A function $f\in \K_q$ if and only if there exists $g\in \es^*$ such that
$$ \frac{|g(z)+f(qz)-f(z)|}{|g(z)|}\le 1 \quad \mbox{ for all $z\in \D$.}
$$
\end{lemma}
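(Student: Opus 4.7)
The proof should be a direct algebraic manipulation starting from Definition~\ref{def2} and unwinding the $q$-difference operator. The plan is to show that the inequality in Definition~\ref{def2} is literally equivalent to the claimed inequality, so the characterization follows with the very same starlike function $g$.

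First I would rewrite $z(D_qf)(z)$ using the definition in (\ref{q-operator}). Since $(D_qf)(z)=(f(z)-f(qz))/(z(1-q))$ for $z\neq 0$, we have
$$z(D_qf)(z)=\frac{f(z)-f(qz)}{1-q}.$$
Substituting this into the defining inequality of $\K_q$ gives
$$\left|\frac{f(z)-f(qz)}{(1-q)g(z)}-\frac{1}{1-q}\right|\le \frac{1}{1-q}.$$

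Next I would pull out the common factor $1/(1-q)$ from both sides. Since $1-q>0$, this is legitimate and yields
$$\frac{|f(z)-f(qz)-g(z)|}{|g(z)|}\le 1,$$
which is the same as $|g(z)+f(qz)-f(z)|/|g(z)|\le 1$ after multiplying the numerator by $-1$. Conversely, reversing every step shows that the displayed inequality of the lemma implies the defining inequality of $\K_q$ with the same $g\in \es^*$. The point $z=0$ causes no issue since both sides of the asserted inequality extend continuously (both numerator and denominator vanish to the same order), and one can check the inequality there by the normalization $f(0)=0=g(0)$, $f'(0)=1=g'(0)$, or simply invoke the removable singularity of $z(D_qf)(z)/g(z)$ at the origin as was done implicitly in Definition~\ref{def2}.

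There is no real obstacle: the statement is essentially a reformulation. The only minor care needed is in handling $z=0$ (so that dividing by $g(z)$ makes sense everywhere in $\D$) and in verifying that the two sign conventions match, both of which are trivial. Accordingly the proof will be short, consisting of the chain of equivalent inequalities above read in both directions.
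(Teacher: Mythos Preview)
Your proposal is correct and matches the paper's own proof, which simply states that the lemma follows immediately upon substituting the expression for $(D_qf)(z)$ from (\ref{q-operator}) into Definition~\ref{def2}. Your write-up is in fact more detailed than the paper's one-line justification, but the approach is identical.
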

\begin{proof}
The proof follows immediately after making the substitution for the expression
of the $q$-difference operator $(D_qf)(z)$ in Definition~\ref{def2}.
\end{proof}

In this section, Lemma~\ref{new-lem1} will act as one of the crucial results
to estimate coefficient bounds for series representation of functions in the class $\K_q$,
i.e. in other words, we analyze the Bieberbach-de Branges theorem 
for the class of $q$-close-to-convex functions. The Bieberbach conjecture
for close-to-convex functions is proved by Reade \cite{Rea55} (see also \cite{Good83}
for more details). It states that {\em if $f\in \K$, then $|a_n|\le n$ for all $n\geq 2$}.
%The following is an immediate consequence of Lemma~\ref{new-lem1}:
%\begin{corollary}
%The class $\K_q$ satisfies the inclusion relation
%$$\K_q \supseteq \bigcap_{q<t<1} \K_t .
%$$
%\end{corollary}
%
%%\begin{remark}
%\noindent
%{\it Remark.} The characterization
%$$\es^*=\bigcap_{0<q<1}\es_q^*
%$$
%for $\es^*$ was obtained in \cite{IMS90}. Hence, in the definition of $\K_q$,
%the existence of $g\in \es^*$ may be replaced by the existence of $g\in \es_q^*$ 
%for all $q\in (0,1)$.
%%\end{remark}
%
%
%\begin{problem} Is it true that
%$$\K=\bigcap_{0<q<1}\K_q ?
%$$
%\end{problem}
%\noindent
%{\em Solution.}
%If $f\in \K_q$ for all $q\in (0,1)$, then as $q\to 1^{-}$,
%${\rm Re}\, \{zf'(z)/g(z)\}\geq 0$ for some $g\in \es^*$. Thus, in one hand, we obtain
%$$\bigcap_{0<q<1}\K_q\subseteq  \K.
%$$
%{\bf On the other hand (needs to be solved !)}
%
%This shows that functions in $\K_q$ are close-to-convex and hence univalent in the unit disk.

%Using the similar technique as in the proof of Theorem~\ref{new-thm1}, it is natural
%to investigate the following $q$-analog of the Bieberbach-de Branges theorem problem 
%for the class of close-to-convex functions,
We now proceed to state and prove the Bieberbach-de Branges Theorem for functions in
the $q$-close-to-convex family.
\begin{theorem}[Bieberbach-de Branges Theorem for $\K_q$]\label{new-thm2}
If $f\in \K_q$, then
$$|a_n|\leq \frac{1-q}{1-q^n}\left[n+\frac{n(n-1)}{2}(1+q)\right] \quad \mbox{ for all $n\ge 2$.}
$$
\end{theorem}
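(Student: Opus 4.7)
The plan is to turn the $\K_q$ condition into a subordination identity, combine a Schwarz-type bound on one factor with the classical starlike bound $|b_n|\le n$ on the other, and recover the stated estimate by convolution. Fix $f\in\K_q$ and choose $g\in\es^*$ realizing Definition~\ref{def2}. Writing
\[
\psi(z)\;:=\;(1-q)\,\frac{z(D_qf)(z)}{g(z)}\;-\;1,
\]
the defining inequality is exactly $|\psi(z)|\le 1$ on $\D$, and evaluation at $z=0$ gives $\psi(0)=-q$. Multiplying through and using $(1-q)z(D_qf)(z)=f(z)-f(qz)$ yields
\[
f(z)-f(qz)\;=\;g(z)\bigl(1+\psi(z)\bigr),\qquad z\in\D,
\]
which is the $q$-analog of the classical representation $zf'=gp$ that underlies Reade's bound $|a_n|\le n$ for close-to-convex functions.

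The main technical input is the sharp coefficient bound $|\psi_n|\le 1-q^2$ for every $n\ge 1$. Apply the Schwarz lemma to the M\"obius-shifted self-map $(\psi+q)/(1+q\psi)$, which vanishes at $z=0$: this produces a holomorphic $\omega:\D\to\overline{\D}$ with
\[
\psi(z)+q \;=\; \frac{(1-q^2)\,z\,\omega(z)}{1-qz\,\omega(z)} \;=\; \beta\bigl(z\omega(z)\bigr),
\]
where $\beta(w):=(1-q^2)w/(1-qw)$ is a M\"obius map carrying $\D$ onto the disk $\{|v-q|<1\}$. Hence $\beta$ is convex univalent with $\beta(0)=0$ and $\beta'(0)=1-q^2$, and since $z\omega(z)$ is a Schwarz function, the displayed equation exhibits $\psi+q$ as subordinate to $\beta$ in $\D$. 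The classical coefficient principle for subordination to a convex univalent majorant (Rogosinski) then gives $|(\psi+q)_n|\le |\beta'(0)|=1-q^2$, and since the constant $q$ contributes only to the $n=0$ coefficient we obtain $|\psi_n|\le 1-q^2$ for all $n\ge 1$. I expect this convex-subordination step to be the chief obstacle, because the na\"\i ve Schwarz-Pick estimate applied directly to $\psi$ only yields $|\psi_n|\le 1+q$, which is too weak to produce the classical limit $|a_n|\le n$ as $q\to 1^-$.

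With both ingredients in hand, compare coefficients of $z^n$ on the two sides of $f(z)-f(qz)=g(z)(1+\psi(z))$. Writing $g(z)=z+\sum_{k\ge 2}b_kz^k$, the $k=n$ term of the convolution contributes $b_n\psi_0=-qb_n$, which combines with the $b_n$ from the summand $1$ to produce $(1-q)b_n$, so
\[
a_n(1-q^n)\;=\;(1-q)\,b_n\;+\;\sum_{k=1}^{n-1}b_k\,\psi_{n-k}.
\]
Using $|b_k|\le k$ for $g\in\es^*$ (the Bieberbach-de Branges theorem, already classical in the starlike case) together with $|\psi_{n-k}|\le 1-q^2$ from the previous paragraph, the triangle inequality gives
\[
(1-q^n)\,|a_n|\;\le\;(1-q)n+(1-q^2)\sum_{k=1}^{n-1}k\;=\;(1-q)\Bigl[n+\tfrac{n(n-1)}{2}(1+q)\Bigr],
\]
which is precisely the stated bound after division by $1-q^n$. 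As a final sanity check, $(1-q)/(1-q^n)\to 1/n$ and the bracketed expression tends to $n^2$ as $q\to 1^-$, so the bound degenerates to $|a_n|\le n$, in agreement with the close-to-convex case.
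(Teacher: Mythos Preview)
Your proof is correct and follows essentially the same route as the paper: rewrite the $\K_q$ condition as $f(z)-f(qz)=g(z)(1+\psi(z))$ with $|\psi|\le 1$ and $\psi(0)=-q$ (the paper uses $w=-\psi$), invoke the coefficient bound $|\psi_n|\le 1-|\psi(0)|^2=1-q^2$, and combine with $|b_k|\le k$ via the Cauchy product. The only difference is that you supply a self-contained derivation of $|\psi_n|\le 1-q^2$ via subordination to the convex map $\beta$ and Rogosinski's theorem, whereas the paper simply quotes this classical inequality for bounded analytic functions.
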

\begin{proof}
Since $f\in\K_q$, by Lemma~\ref{new-lem1} there exists $w:\,\D \to \overline{\D}$ such that
\begin{equation}\label{eqn1-sec3}
g(z)+f(qz)-f(z)=w(z)g(z).
\end{equation}
Clearly $w(0)=q$. By assuming $a_1=1=b_1$, we then have
$$\sum_{n=1}^\infty (b_n+a_nq^n-a_n)z^n = \sum_{n=1}^\infty q b_n z^n 
+ \sum_{n=2}^\infty \left(\sum_{k=1}^{n-1}w_{n-k}b_k\right)z^n .
$$
Equating the coefficients of $z^n$, for $n\geq 2$, we obtain
$$a_n(q^n-1)=b_n(q-1)+\sum_{k=1}^{n-1}w_{n-k}b_k .
$$
From the classical result \cite{DCP11}, one can verify that 
$|w_n|\leq 1-|w_0|^2=1-q^2$ for all $n\ge 1$. 
%So, we obtained
%\begin{equation}\label{new-eq2}
%|a_n|\leq \frac{1-q}{1-q^n}\left[|b_n|+(1+q)\sum_{k=1}^{n-1}|b_k|\right] 
%\quad\mbox{ for all $n\ge 2$}.
%\end{equation}
Since $g(z)=z+\sum_{n=2}^\infty b_nz^n\in S^*$, 
%we have $|b_n|\leq n$ for all $n\ge 2$
%(equivalently, $|b_k|\leq k$ for all $k\geq 1$ as $b_1=1$). 
%Finally, from (\ref{new-eq2}) 
we get
\begin{align*}
|a_n| & \leq \frac{1-q}{1-q^n}\left[n+(1+q)\sum_{k=1}^{n-1}k\right] \quad \mbox{ for all $n\geq 2$}.
\end{align*}
This proves the conclusion of our theorem.
\end{proof}

\noindent
{\bf Remark.}
When $q\to 1^{-}$, certainly Theorem~\ref{new-thm2} yields the 
Bieberbach conjecture problem for close-to-convex functions.

It is easy to see, by the usual ratio test, that the series 
\begin{equation}\label{series1}
z+\sum_{n=2}^\infty \frac{1-q}{1-q^n}\left[n+\frac{n(n-1)}{2}(1+q)\right]z^n
\end{equation}
converges for $|z|<1$. Indeed, we can ascertain by using the convergence factor
for the series $\sum_{n=1}^\infty z^n/(1-q^n)$ (see \cite[3.2.2.1]{Sla66})
that the series given by (\ref{series1}) converges to the function
$$\frac{1+q}{2}z^2\frac{d^2\Psi(q;z)}{dz^2}+z\frac{d\Psi(q;z)}{dz}\,,
$$
where $\Psi(q;z):=z\Phi[q,q;q^2;q,z]$ represents its Heine hypergeometric function.
Note that the $q$-hypergeometric series was developed by Heine~\cite{Hei46}
as a generalization of the well-known Gauss hypergeometric series:
$$\Phi[a,b;c;q,z]=\sum_{n=0}^\infty \frac{(a;q)_n(b;q)_n}{(c;q)_n(q;q)_n}z^n,\quad 
|q|<1, ~1\neq cq^n, ~|z|<1,
$$
where the $q$-shifted factorial $(a;q)_n$ is defined by 
$$(a;q)_n=(1-a)(1-aq) \cdots (1-aq^{n-1}) ~~\mbox{ and }~~ (a;q)_0=1.
$$
This is also known as the basic hypergeometric series and its convergence function
is known as the basic hypergeometric function. 
We refer one of the standard books \cite{Sla66} for the notation of the basic 
hypergeometric function. For history of $q$-series related calculus 
and their applications, we suggest readers to refer \cite{Ern02}. 

Due to Frideman's result, we now study the special cases of Theorem~\ref{new-thm2}
with respect to the nine functions having integer coefficients.
However, in this situation, it is enough to consider the identity function and 
four other functions which contain factors $1-z$ instead of $1\pm z$ in the denominator.
In particular, Theorem~\ref{new-thm2} reduces to the following
corollaries. Note that we provide proofs of last two consequences
as they involve variations in the exponents, whereas the first three
consequences follow directly after making precise substitution for 
the starlike functions $g(z)$.

\begin{corollary}
If $f\in \K_q$ with the Koebe function $g(z)=z/(1-z)^2$, then for all $n\geq 2$ we have 
$$|a_n| \leq \frac{1-q}{1-q^n}\left[n+(1+q)\,\frac{n(n-1)}{2}\right].
$$
\end{corollary}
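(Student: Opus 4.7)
The plan is to apply Theorem~\ref{new-thm2} directly, taking advantage of the fact that the Koebe function $g(z)=z/(1-z)^2$ has the explicit expansion $g(z)=\sum_{n=1}^\infty n z^n$, so that its starlike coefficients satisfy $b_n = n$ for all $n\ge 1$. This is exactly the equality case of the Bieberbach bound $|b_n|\le n$ for functions in $\es^*$, which is the only non-trivial ingredient used in Theorem~\ref{new-thm2} to pass from the recursion in the $a_n$'s to the stated coefficient estimate.

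Concretely, I would revisit the relation derived from (\ref{eqn1-sec3}) in the proof of Theorem~\ref{new-thm2}, namely
\[
a_n(q^n-1) = b_n(q-1) + \sum_{k=1}^{n-1} w_{n-k}\,b_k, \qquad n\ge 2,
\]
and substitute $b_k=k$ throughout. Using the Schur-type estimate $|w_m|\le 1-q^2$ for $m\ge 1$ (with $w(0)=q$) that was recalled in the proof of Theorem~\ref{new-thm2}, one then obtains
\[
(1-q^n)|a_n| \;\le\; n(1-q) + (1-q^2)\sum_{k=1}^{n-1} k \;=\; n(1-q) + (1-q)(1+q)\,\frac{n(n-1)}{2}.
\]
Dividing by $1-q^n$ yields exactly the asserted bound.

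There is essentially no obstacle here; the corollary is a direct specialization. One may equivalently phrase the argument as the observation that Theorem~\ref{new-thm2} is proved by majorizing $|b_k|$ by $k$, and this majorization is saturated precisely when $g$ is the Koebe function, so the general bound in Theorem~\ref{new-thm2} already coincides with what one obtains by the explicit substitution $b_k=k$. No additional analytic input is required beyond what is used in Theorem~\ref{new-thm2}.
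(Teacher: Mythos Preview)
Your proposal is correct and follows essentially the same approach as the paper, which states that this corollary (along with the next two) ``follows directly after making precise substitution for the starlike functions $g(z)$'' in the proof of Theorem~\ref{new-thm2}. In fact, since the general bound of Theorem~\ref{new-thm2} already uses only $|b_k|\le k$, the Koebe case $b_k=k$ is precisely the extremal specialization, so your substitution argument is exactly what the paper has in mind.
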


If $f\in \K$ with $g(z)=z$, then for all $n\ge 2$ it is well-known that $|a_n|\le 2/n$. 
As a generalization, we have the following:
\begin{corollary}
If $f\in \K_q$ with $g(z)=z$, then for all $n\geq 2$ we have $|a_n|\le (1-q^2)/(1-q^n)$.
\end{corollary}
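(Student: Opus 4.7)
The plan is to follow the proof strategy of Theorem~\ref{new-thm2}, specialized to the trivially starlike choice $g(z)=z$, which causes most of the estimation machinery to collapse. First I would invoke Lemma~\ref{new-lem1} with this $g$ to produce an analytic self-map $w:\D\to\overline{\D}$ satisfying
$$z+f(qz)-f(z)=z\,w(z).$$
Substituting $f(z)=z+\sum_{n\ge 2}a_n z^n$ and matching coefficients of $z^n$ on both sides would yield $w(0)=q$ together with the clean identity $w_{n-1}=a_n(q^n-1)$ for every $n\ge 2$; no residual convolution terms appear, because $g$ has no Taylor coefficients beyond the first.

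Second, I would apply the same Schur-type coefficient bound invoked in the proof of Theorem~\ref{new-thm2}: for a bounded analytic self-map $w$ of $\D$ the Taylor coefficients satisfy $|w_n|\le 1-|w_0|^2$ for all $n\ge 1$. With $w_0=q$ this gives $|w_k|\le 1-q^2$ for every $k\ge 1$, which combined with the identity above rearranges to
$$(1-q^n)|a_n|\le 1-q^2, \quad n\ge 2,$$
i.e.\ the desired bound $|a_n|\le (1-q^2)/(1-q^n)$.

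There is no serious obstacle here, so the main thing to highlight is why the answer is so much cleaner than in Theorem~\ref{new-thm2}. What disappears is the whole convolution $\sum_{k=1}^{n-1} w_{n-k} b_k$, since the Taylor coefficients $b_k$ of $g(z)=z$ vanish for $k\ge 2$; consequently the term $(1+q)\,n(n-1)/2$ in the general bound drops out, leaving only the contribution of $w_{n-1}$. Observing that $g(z)=z\in\es^*$ is immediate and needs no separate verification, which is why this case warrants a corollary rather than a theorem.
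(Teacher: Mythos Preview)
Your proof is correct and follows exactly the approach the paper intends: the paper does not write out a separate proof for this corollary but states that it ``follows directly after making precise substitution for the starlike functions $g(z)$'' in the proof of Theorem~\ref{new-thm2}. You have carried out precisely that substitution, correctly observing that with $g(z)=z$ the convolution sum $\sum_{k=1}^{n-1}w_{n-k}b_k$ collapses to the single term $w_{n-1}$, which together with $|w_{n-1}|\le 1-q^2$ gives the bound.
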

Here we note that the series $z+\sum_{n=2}^\infty (1-q^2)/(1-q^n)z^n$ converges to
the Heine hypergeometric function $(z+qz)\Phi[q,q;q^2;q,z]-qz
=z+z^2\Phi[q^2,q^2;q^3;q^2,z]$ and it follows from
\cite[3.2.2, pp.~91]{Sla66}.

If $f\in \K$ with $g(z)=z/(1-z)$, then for all $n\ge 2$ it is known that $|a_n|\le (2n-1)/n$.
We find the following analogous result:
\begin{corollary}
If $f\in \K_q$ with $g(z)=z/(1-z)$, then for all $n\geq 2$ we have 
$$|a_n|\le \frac{1-q}{1-q^n} \,[n+q(n-1)].
$$
\end{corollary}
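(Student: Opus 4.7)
My plan is to specialize the proof of Theorem~\ref{new-thm2} to the specific starlike function $g(z)=z/(1-z)$, exploiting the fact that its Taylor coefficients $b_n=1$ are much smaller than the generic de Branges bound $|b_n|\le n$ used for arbitrary starlike $g$. This refinement is exactly what accounts for the improvement from an $n(n-1)/2$ term to a mere $(n-1)$ term in the exponent of the bound, matching the comment in the excerpt about ``variations in the exponents''.

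Concretely, I would begin by invoking Lemma~\ref{new-lem1}: since $f\in\K_q$ with the specified $g$, there exists a holomorphic self-map $w:\D\to\overline{\D}$ with $w(0)=q$ satisfying
$$g(z)+f(qz)-f(z)=w(z)g(z).$$
Expanding all three factors as power series and comparing coefficients of $z^n$ for $n\ge 2$, exactly as in the proof of Theorem~\ref{new-thm2}, one obtains
$$a_n(q^n-1)=b_n(q-1)+\sum_{k=1}^{n-1}w_{n-k}b_k.$$
Substituting $b_k\equiv 1$ (the Taylor coefficients of $z/(1-z)$) collapses this to
$$a_n(q^n-1)=(q-1)+\sum_{k=1}^{n-1}w_{n-k}.$$

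Next, I would apply the same Schur-type estimate $|w_j|\le 1-|w(0)|^2=1-q^2$ (for $j\ge 1$) that powered the proof of Theorem~\ref{new-thm2}. Taking moduli in the previous display and using the triangle inequality gives
$$|a_n|\,(1-q^n)\le(1-q)+(n-1)(1-q^2)=(1-q)\bigl[1+(n-1)(1+q)\bigr].$$
A one-line rearrangement $1+(n-1)(1+q)=n+q(n-1)$ then yields the claimed bound
$$|a_n|\le\frac{1-q}{1-q^n}\bigl[n+q(n-1)\bigr].$$

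There is no serious obstacle here; the computation is essentially a restatement of the argument of Theorem~\ref{new-thm2}. The only point requiring attention is to resist the temptation to apply the coarser bound $|b_k|\le k$ and instead use the sharp identity $b_k=1$ available for this particular $g$. Once this substitution is made, the improved inequality falls out automatically, and the limiting case $q\to 1^-$ recovers the classical sharp estimate $|a_n|\le(2n-1)/n$ for close-to-convex functions with $g(z)=z/(1-z)$ mentioned just before the statement.
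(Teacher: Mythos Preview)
Your argument is correct and is exactly the approach the paper intends: it explicitly states that this corollary (one of the ``first three consequences'') follows ``directly after making precise substitution for the starlike functions $g(z)$'' into the proof of Theorem~\ref{new-thm2}, which is precisely the substitution $b_k\equiv 1$ you carry out. One small remark: the paper's phrase about ``variations in the exponents'' actually refers to the last two corollaries (those for $g(z)=z/(1-z^2)$ and $g(z)=z/(1-z+z^2)$, where odd/even or mod-$3$ cases must be treated separately), not to this one.
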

One can similarly verify that the series
$z+\sum_{n=2}^\infty \frac{1-q}{1-q^n}[n+q(n-1)]$
converges to the function $z(1+q)\frac{d}{dz}\Psi(q;z)-q\Psi(q;z)$,
where $\Psi(q;z):=z\Phi[q,q;q^2;q,z]$ represents its Heine hypergeometric function.

If $f\in \K$ with $g(z)=z/(1-z^2)$, then for all $m\ge 1$ it is known that 
$$|a_n|\leq \left \{
\begin{array}{ll}
1, & \mbox{ if } n=2m-1;\\
1, & \mbox{ if } n=2m.
     \end{array}\right..
$$
As a generalization, we now state the following corollary along with an outline of its proof:
\begin{corollary}
If $f\in \K_q$ with $g(z)=z/(1-z^2)$, then for all $m\geq 1$ we have 
$$|a_n|\leq \left \{
\begin{array}{ll}
\displaystyle{\frac{1-q}{1-q^n}\left(\frac{n}{2}(1+q)+\frac{1}{2}(1-q)\right)}, 
& \mbox{ if } n=2m-1;\\[4mm]
\displaystyle{\Big(\frac{1-q^2}{1-q^n}\Big)\frac{n}{2}}, & \mbox{ if } n=2m. \end{array}\right..
$$
\end{corollary}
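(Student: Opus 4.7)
The plan is to follow exactly the same strategy used in the proof of Theorem~\ref{new-thm2}, but to exploit the sparse (odd-only) structure of the Taylor coefficients of $g(z)=z/(1-z^2)$. First I would invoke Lemma~\ref{new-lem1} to write $g(z)+f(qz)-f(z)=w(z)g(z)$ for some $w:\D\to\overline{\D}$ with $w(0)=q$, together with the Schur-type estimate $|w_n|\le 1-q^2$ for $n\ge 1$ quoted in the proof of Theorem~\ref{new-thm2}. Expanding $g(z)=z+z^3+z^5+\cdots$ gives the coefficient data $b_k=1$ if $k$ is odd and $b_k=0$ if $k$ is even, which I would plug into the identity
\begin{equation*}
a_n(q^n-1)=b_n(q-1)+\sum_{k=1}^{n-1}w_{n-k}b_k
\end{equation*}
derived by comparing coefficients in~(\ref{eqn1-sec3}).

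Next I would split into two cases by the parity of $n$. When $n=2m-1$ is odd, the sum ranges over odd $k\in\{1,3,\dots,2m-3\}$, contributing $m-1=(n-1)/2$ terms, while the $b_n(q-1)$ term is present and equals $q-1$. The triangle inequality then yields
\begin{equation*}
|a_n|\le \frac{1-q}{1-q^n}+\frac{(m-1)(1-q^2)}{1-q^n}=\frac{1-q}{1-q^n}\left[1+\frac{n-1}{2}(1+q)\right],
\end{equation*}
which after a short algebraic rearrangement is exactly $\frac{1-q}{1-q^n}\bigl(\frac{n}{2}(1+q)+\frac{1}{2}(1-q)\bigr)$. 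When $n=2m$ is even, the $b_n(q-1)$ term vanishes and the sum now runs over odd $k\in\{1,3,\dots,2m-1\}$, producing $m=n/2$ terms; thus
\begin{equation*}
|a_n|\le \frac{m(1-q^2)}{1-q^n}=\frac{n}{2}\cdot\frac{1-q^2}{1-q^n}.
\end{equation*}

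No step is really hard, since Lemma~\ref{new-lem1} and the Schur bound do all the analytic work and the starlike function $g$ is completely explicit. The only place where a slip is easy is the bookkeeping of which indices $k$ are odd inside $[1,n-1]$: this count differs by one between the two parity cases, and it is precisely that difference which produces the asymmetric $\frac{1}{2}(1-q)$ correction in the odd-$n$ bound. I would therefore write the parity case-split carefully and then present only the odd-$n$ calculation in detail, indicating that the even-$n$ case is entirely analogous.
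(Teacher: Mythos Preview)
Your proposal is correct and follows essentially the same approach as the paper: both invoke Lemma~\ref{new-lem1} to obtain the relation $a_n(q^n-1)=b_n(q-1)+\sum_{k=1}^{n-1}w_{n-k}b_k$ with $|w_k|\le 1-q^2$, specialize to the odd-only coefficients of $g(z)=z/(1-z^2)$, and then split by parity of $n$ to count the surviving terms. The only cosmetic difference is that the paper re-expands the product $g(z)w(z)$ explicitly into separate $z^{2n-1}$ and $z^{2n}$ sums before comparing coefficients, whereas you quote the general coefficient identity from the proof of Theorem~\ref{new-thm2} and substitute $b_k$; the bookkeeping and final estimates are identical.
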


\begin{proof}
%By Lemma~\ref{new-lem1} there exists $w:\,\D \to \overline{\D}$ such that
%$$f(z)+f(qz)=g(z)+ w(z)g(z).
%$$
%Clearly $w(0)=q$. 
Since $\displaystyle{g(z)=\frac{z}{1-z^2}}=\sum_{n=1}^{\infty}z^{2n-1}$, by (\ref{eqn1-sec3})
we get
$$ \sum_{n=1}^{\infty}(q^n-1)a_n z^n=(q-1)\sum_{n=1}^{\infty}z^{2n-1}+\left(\sum_{n=1}^{\infty}z^{2n-1}\right)
\left(\sum_{n=1}^{\infty}w_n z^n\right).
$$
This is equivalent to
\begin{equation}\label{new-eq3}
 \sum_{n=1}^{\infty}(q^n-1)a_n z^n=(q-1)\sum_{n=1}^{\infty}z^{2n-1}+\sum_{n=2}^{\infty}\left(\sum_{k=1}^{n-1}w_{2k} \right)z^{2n-1}
+\sum_{n=1}^{\infty}\left(\sum_{k=1}^{n}w_{2k-1} \right)z^{2n}.
\end{equation}
In order to prove the required optimal bound for $|a_n|$, in this situation,
it is appropriate to compare the coefficients of $z^{2n-1}$ and $z^{2n}$ separately.

In (\ref{new-eq3}), first we compare the coefficients of $z^{2n-1},$ for $n\geq 2$, we get 
$$(q^{2n-1}-1)a_{2n-1}=(q-1)+\sum_{k=1}^{n-1}w_{2k}.
$$
Since $|w_k|\leq (1-q^2)$ for all $k\geq 1$  and $q\in (0,1),$ we have
%$$ |a_{2n-1}|\leq \frac{1}{1-q^{2n-1}}\left((1-q)+(1-q^2)(n-1)\right),
%$$
%equivalently,
$$ |a_{2n-1}|\leq \frac{1-q}{(1-q^{2n-1})}\left(-q+(1+q)n\right).
$$
Secondly, by comparing the coefficients of $z^{2n},$ for $n\geq 1,$ we obtain
$$(q^{2n}-1)a_{2n}=\sum_{k=1}^{n}w_{2k-1},
$$
and similarly we get the bound
$$ |a_{2n}|\leq \frac{1-q}{(1-q^{2n})}(1+q)n.
$$
Thus, we prove the required optimal bound for $|a_n|$.
\end{proof}

If $f\in \K$ with $g(z)=z/(1-z+z^2)$, then for all $n\ge 2$ it is known that 
$$|a_n|\leq \left \{
\begin{array}{lll}
\displaystyle{\frac{4n+1}{3n}}, & \mbox{ if } n=3m-1;\\[3mm]
\displaystyle{\frac{4}{3}}, & \mbox{ if } n=3m; \\[4mm]
\displaystyle{\frac{4n-1}{3n}}, & \mbox{ if } n=3m+1.
\end{array}\right..
$$
As a generalization, we have the following:
\begin{corollary}
If $f\in \K_q$ with $g(z)=z/(1-z+z^2)$, then for all $m\geq 1$ we have 
$$|a_n| \leq \left \{
\begin{array}{ll}
\displaystyle{\frac{1-q}{1-q^n}\left(\frac{1}{3}(2-q)+\frac{2n}{3}(1+q)\right)}, 
& ~\mbox{if}~\ n=3m-1;\\[4mm]
\displaystyle{\frac{1-q^2}{1-q^n}\frac{2n}{3}}, & ~\mbox{if}~\ n=3m; \\[4mm]
\displaystyle{\frac{1-q}{1-q^n}\left(\frac{2n}{3}(1+q)+\frac{1}{3}(1-2q)\right)}, 
& ~\mbox{if}~\ n=3m+1.
     \end{array}
\right..
$$
\end{corollary}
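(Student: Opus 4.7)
The plan is to mimic the proof of the previous corollary (with $g(z)=z/(1-z^2)$), but now keyed to the period-three structure of $1-z+z^2$. The first step is to find the power-series expansion of $g(z)=z/(1-z+z^2)$. Using the factorization $1-z+z^2=(1+z^3)/(1+z)$, one gets
$$g(z)=\frac{z(1+z)}{1+z^3}=\sum_{k=0}^\infty(-1)^k(z^{3k+1}+z^{3k+2})=\sum_{n=1}^\infty b_n z^n,$$
so $b_n=0$ when $n\equiv 0\pmod 3$ and $|b_n|=1$ otherwise. This residue-class behavior is what forces the three separate bounds in the statement.

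Next, as in the proof of the previous corollary, substitute into (\ref{eqn1-sec3}) and write $w(z)-1=(q-1)+\sum_{n\geq 1}w_n z^n$ to obtain
$$\sum_{n=1}^\infty(q^n-1)a_n z^n=(q-1)g(z)+\Bigl(\sum_{n\geq 1}w_n z^n\Bigr)\Bigl(\sum_{m\geq 1}b_m z^m\Bigr).$$
Comparing coefficients of $z^n$ for $n\geq 2$ produces the recurrence
$$(q^n-1)a_n=(q-1)b_n+\sum_{k=1}^{n-1}w_k b_{n-k}.$$
Invoking the classical bound $|w_k|\leq 1-q^2$ for $k\geq 1$ (used already in the proof of Theorem~\ref{new-thm2}) together with the triangle inequality, I then obtain
$$|a_n|\leq \frac{1}{1-q^n}\left[(1-q)|b_n|+(1-q^2)\sum_{m=1}^{n-1}|b_m|\right].$$

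The final step is pure bookkeeping: let $N(n):=\sum_{m=1}^{n-1}|b_m|$, the number of integers in $\{1,\dots,n-1\}$ not divisible by $3$. A direct count gives $N(3m-1)=2m-1$ and $N(3m)=N(3m+1)=2m$, while $|b_n|$ equals $1,0,1$ in the three respective residue classes. Plugging these in and simplifying algebraically---for instance, $2m(1+q)-q=\tfrac{1}{3}(2-q)+\tfrac{2n}{3}(1+q)$ when $n=3m-1$, and analogous identities in the other two cases---reproduces exactly the three estimates claimed. The only real obstacle is accuracy rather than difficulty: one must track the three residue classes modulo $3$ correctly and verify that the raw bound $(1-q)|b_n|+(1-q^2)N(n)$ rewrites into the prescribed symmetric forms.
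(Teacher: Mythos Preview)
Your proposal is correct and follows essentially the same approach as the paper: both expand $g(z)=z(1+z)/(1+z^3)$, substitute into (\ref{eqn1-sec3}), and separate the coefficient comparison by residue class modulo~$3$ before applying $|w_k|\le 1-q^2$. Your version is in fact a tidier packaging of the same argument, since you write the single recurrence $(q^n-1)a_n=(q-1)b_n+\sum_{k=1}^{n-1}w_k b_{n-k}$ and then count $N(n)=\sum_{m<n}|b_m|$, whereas the paper expands the Cauchy product into six explicit sums and handles each residue class by hand.
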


\begin{proof}
% By Lemma~\ref{new-lem1} there exists $w:\,\D \to \overline{\D}$ such that
%$$f(z)+f(qz)=g(z)+w(z)g(z).
%$$
%Clearly $w(0)=q$. 
By rewriting the function $\displaystyle g(z)=z/(1-z+z^2),$ we obtain 
%Since $\displaystyle g(z)=z/(1-z+z^2),$ we can write
$$g(z)=\frac{z(1+z)}{1+z^3}=\sum_{n=1}^{\infty}(-1)^{n-1}z^{3n-2}
+\sum_{n=1}^{\infty}(-1)^{n-1}z^{3n-1}.
$$
Then simplifying the relation (\ref{eqn1-sec3}), we get
%\begin{eqnarray*}
%\sum_{n=1}^{\infty}(q^n-1)a_n z^n&=&(q-1)\left( \sum_{n=1}^{\infty}(-1)^{n-1}z^{3n-2}+\sum_{n=1}^{\infty}(-1)^{n-1}z^{3n-1}\right)\\ 
%&\quad +&\left( \sum_{n=1}^{\infty}(-1)^{n-1}z^{3n-2}+\sum_{n=1}^{\infty}(-1)^{n-1}z^{3n-1}\right)
%\left(\sum_{n=1}^{\infty}w_n z^n\right),
% \end{eqnarray*}
%i.e.
\begin{eqnarray}\label{new-eq4}
 \nonumber&&\hspace*{-1.5cm}\sum_{n=1}^{\infty}(q^n-1)a_n z^n\\[-3mm]
\nonumber &=&(q-1)\left(\sum_{n=1}^{\infty}(-1)^{n-1}z^{3n-2}+
 \sum_{n=1}^{\infty}(-1)^{n-1}z^{3n-1}\right)\\
\nonumber & \quad +&\sum_{n=1}^{\infty}\left(\sum_{k=1}^{n}(-1)^{n-k}w_{3k-2} \right)z^{3n-1}
 +\sum_{n=1}^{\infty}\left(\sum_{k=1}^{n}(-1)^{n-k}w_{3k-1} \right)z^{3n}\\
\nonumber & \quad +&\sum_{n=1}^{\infty}\left(\sum_{k=1}^{n}(-1)^{n-k}w_{3k} \right)z^{3n+1}
+\sum_{n=2}^{\infty}\left(\sum_{k=1}^{n-1}(-1)^{n-k}w_{3k} \right)z^{3n-1}\\
  & \quad +&\sum_{n=1}^{\infty}\left(\sum_{k=1}^{n}(-1)^{n-k}w_{3k-2} \right)z^{3n}
+\sum_{n=1}^{\infty}\left(\sum_{k=1}^{n}(-1)^{n-k-1}w_{3k-1} \right)z^{3n+1}.
\end{eqnarray}

First equating the coefficients of $z^{3n-1},$ for $n\geq 2,$ in (\ref{new-eq4}), we get
$$(q^{3n-1}-1)a_{3n-1}=(-1)^{n-k}(q-1)+\sum_{k=1}^{n}(-1)^{n-k}w_{3k-2}
 +\sum_{k=1}^{n}(-1)^{n-k}w_{3k}.
$$
Since $|w_k|\leq (1-q^2)$ for all $k\geq 1$ and $q\in (0,1),$ we have
%$$ |a_{3n-1}|\leq \frac{1}{1-q^{3n-1}}\left((1-q)+(1-q^2)n+(1-q^2)(n-1)\right),
%$$
%which is equivalent to
$$|a_{3n-1}|\leq \frac{1-q}{(1-q^{3n-1})}\left(-q+2(1+q)n)\right).
$$
Next, for all $n\geq 1$, we compare the coefficients of $z^{3n}$ and $z^{3n+1}$  
in (\ref{new-eq4}), we respectively obtain the coefficient bounds
%$$(q^{3n}-1)a_{3n}=\sum_{k=1}^{n}(-1)^{n-k}w_{3k-1}+\sum_{k=1}^{n}(-1)^{n-k}w_{3k-2}.
%$$
%Since $|w_k|\leq (1-q^2)$ for all $k\geq 1$ and $q\in (0,1),$ we get
%%$$ |a_{3n}|\leq \frac{1}{1-q^{3n}}\left((1-q^2)n+(1-q^2)n\right),
%%$$
%%equivalently,
$$|a_{3n}|\leq \frac{2(1-q)}{(1-q^{3n})}(1+q)n
~\mbox{ and }~
|a_{3n+1}|\leq \frac{(1-q)}{(1-q^{3n+1})}(1+2(1+q)n).
$$
%$$
%At last, by equating the coefficient of $z^{3n+1},$ for $n\geq 1,$ in (\ref{new-eq4}), we get
%$$(q^{3n+1}-1)a_{3n+1}=(-1)^n(q-1)+\sum_{k=1}^{n}(-1)^{n-k}w_{3k}
%+\sum_{k=1}^{n}(-1)^{n-k-1}w_{3k-1}.
%$$
%%Since $|w_k|\leq (1-q^2)$ for all $k\geq 1$  and $q\in (0,1),$ we get
%%$$|a_{3n+1}|\leq \frac{1}{1-q^{3n+1}}\left((1-q)+(1-q^2)n+(1-q^2)n\right),
%%$$
%%which is equivalent to
%It similarly brings to the estimation
Thus, the assertion of our corollary follows.
\end{proof}

\noindent
{\bf Remark.}
By making use of Lemma~\cite[Theorem~1.5]{IMS90}, one can also obtain
the Bieberbach-de Branges theorem for $\es^*_q$ as follows. This also
yields the Bieberbach-de Branges theorem for $\es^*$, in particular. 
However, it defers from Theorem~A. 

% \appendix
% \section{}
\section{Appendix}
In this section, we verify that a similar technique used in the previous section yields
a form of the Bieberbach-de Branges theorem for $\es^*_q$. 
This leads to the coefficient problem of Bieberbach-de Branges 
(different from Theorem~A\,!) for the class $\es^*$, when $q\to 1^{-}$, as well.

\begin{theorem}[The Bieberbach-de Branges Theorem for $\es_q^*$]\label{new-thm1}
If $f\in \es_q^*$, then for all $n\geq 2$ we have
\begin{equation}\label{new-eq1}
|a_n|\leq \displaystyle{ \left(\frac{1-q^2}{q-q^n}\right)
{\prod_{k=2}^{n-1}}}\left(1+\frac{1-q^2}{q-q^{k}}\right).
% \left(1+\frac{1-q^2}{q-q^{n-1}}\right)
% \left(1+\frac{1-q^2}{q-q^{n-2}}\right) \cdots \left(1+\frac{1-q^2}{q-q^{2}}\right) .
\end{equation}
\end{theorem}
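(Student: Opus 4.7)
The plan is to mirror the argument of Theorem~\ref{new-thm2} in the $q$-starlike setting by exploiting the characterization $f\in\es_q^*$ iff $|f(qz)/f(z)|\le 1$ on $\D$, stated at the start of Section~3. Writing $w(z):=f(qz)/f(z)$, expansion at the origin gives $w(0)=q$, so $w$ is an analytic self-map of the disk with prescribed value $q$ at zero, and the classical Schur-type bound already used in the proof of Theorem~\ref{new-thm2} furnishes $|w_n|\le 1-q^2$ for every $n\ge 1$.

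The next step is to extract the coefficient recursion from the functional equation $f(qz)=w(z)f(z)$. With $w(z)=q+\sum_{n\ge 1}w_nz^n$ and $f(z)=\sum_{n\ge 1}a_nz^n$ (so $a_1=1$), matching coefficients of $z^n$ on both sides produces
$$(q^n-q)\,a_n=\sum_{k=1}^{n-1}w_{n-k}\,a_k\qquad(n\ge 2).$$
Since $q-q^n>0$ for $q\in(0,1)$ and $n\ge 2$, taking absolute values and applying the bound on $|w_{n-k}|$ yields the recursive estimate
$$|a_n|\le \frac{1-q^2}{q-q^n}\sum_{k=1}^{n-1}|a_k|.$$

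Setting $c_k:=(1-q^2)/(q-q^k)$, the target reduces to $|a_n|\le c_n\prod_{k=2}^{n-1}(1+c_k)$. The base case $n=2$ follows at once from the recursion (the product being empty). For the inductive step, the key ingredient is the telescoping identity
$$1+\sum_{k=2}^{n-1}c_k\prod_{j=2}^{k-1}(1+c_j)=\prod_{k=2}^{n-1}(1+c_k),$$
which follows from $\prod_{j=2}^{k}(1+c_j)-\prod_{j=2}^{k-1}(1+c_j)=c_k\prod_{j=2}^{k-1}(1+c_j)$ by summation. Substituting the inductive hypothesis for $|a_2|,\ldots,|a_{n-1}|$ into the recursive bound and applying this identity collapses the sum $1+\sum_{k=2}^{n-1}|a_k|$ to the product $\prod_{k=2}^{n-1}(1+c_k)$, delivering the asserted bound.

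The only conceptual step is recognizing the telescoping identity above; everything else is routine bookkeeping. As a sanity check, letting $q\to 1^{-}$ gives $c_k\to 2/(k-1)$ and $\prod_{k=2}^{n-1}(1+c_k)\to n(n-1)/2$, so the bound tends to $n$, recovering the classical Bieberbach--de Branges estimate $|a_n|\le n$ for $\es^*$ promised in the appendix's opening.
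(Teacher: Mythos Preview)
Your proof is correct and follows essentially the same route as the paper's: both use the characterization $|f(qz)/f(z)|\le 1$, write $f(qz)=w(z)f(z)$ with $w(0)=q$, invoke $|w_n|\le 1-q^2$, and derive the recursion $|a_n|\le\frac{1-q^2}{q-q^n}\sum_{k=1}^{n-1}|a_k|$. The only cosmetic difference is in the final packaging: the paper peels off factors iteratively by substituting the recursion for $|a_{n-1}|$ back into $\sum_{k=1}^{n-1}|a_k|$ to obtain $(1+c_{n-1})\sum_{k=1}^{n-2}|a_k|$ and continuing downward, whereas you do formal induction upward using the telescoping identity---these are the same computation organized in opposite directions.
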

\begin{proof}
We know that $f\in \es_q^*$ if and only if 
$$|f(qz)/f(z)|\le 1 \quad \mbox{ for all $z\in \D$.}
$$
Then there exists $w:\,\D \to \overline{\D}$ such that
$$\frac{f(qz)}{f(z)}=w(z), \quad \mbox{i.e. $f(qz)=w(z)f(z)$ for all $z\in \D$.}
$$
Clearly, $w(0)=q$. In terms of series expansion, we get (with $a_1=1$ and $w_0=q$)
$$\sum_{n=1}^\infty a_n q^n z^n = \left(\sum_{n=0}^\infty w_nz^n\right)
\left(\sum_{n=1}^\infty a_nz^n\right)=:\sum_{n=1}^\infty c_nz^n,
$$
where $c_n:=\sum_{k=1}^n w_{n-k}a_k=qa_n+\sum_{k=1}^{n-1}w_{n-k}a_k$.
% Rewriting this, by assuming $a_1=1$, we have
% \begin{align*}
% \sum_{n=1}^\infty a_nq^nz^n
% & = \left(\sum_{n=1}^\infty\right)\left(q+\sum_{n=1}^\infty w_nz^n\right)\\
% & = \sum_{n=1}^\infty (qa_n)z^n + \sum_{n=2}^\infty \left(\sum_{k=1}^{n-1}w_{n-k}a_k\right)z^n .
% \end{align*}
Comparing the coefficients of $z^n$ ($n\ge 2$), we get
$$a_n(q^n - q) = \sum_{k=1}^{n-1} w_{n-k}a_k, \quad\mbox{ for $n\geq 2$} .
$$
%Then, for $n\geq 2$, we have
%$$|a_n|\le \frac{1}{q-q^n}\,\sum_{k=1}^{n-1}|w_{n-k}||a_k| .
%$$
Since $|w_n|\le 1-|w_0|^2=1-q^2$ for all $n\ge 1$, we see that
% From the classical result \cite{DCP11}, we have since $|w_n|\leq 1-|w_0|^2=1-q^2$ for all $n\ge 1$ and so we obtain
$$|a_n|  \leq \frac{1-q^2}{q-q^n} \,\sum_{k=1}^{n-1} |a_k| \quad \mbox{ for each $n\geq 2$.}
$$
Thus for $n=2$, one has $|a_2|\le (1-q^2)/(q-q^2)$, and for $n\ge 3$, we apply
a similar technique to estimate $|a_{n-1}|$ and get
$$|a_n|\leq \frac{1-q^2}{q-q^n}\left(1+\frac{1-q^2}{q-q^{n-1}}\right)\,\sum_{k=1}^{n-2}|a_k| .
$$
Iteratively, we conclude that
$$|a_n|\leq \frac{1-q^2}{q-q^n}\left(1+\frac{1-q^2}{q-q^{n-1}}\right)
\left(1+\frac{1-q^2}{q-q^{n-2}}\right)\cdots \left(1+\frac{1-q^2}{q-q^{2}}\right)
$$
for all $n\geq 3.$ This completes the proof.
\end{proof}

\noindent
{\bf Remark.}
One can easily verify that the right hand side of (\ref{new-eq1}) approaches $n$
as $q\to 1^{-}$, which will lead to the Bieberbach-de Branges theorem 
for starlike functions \cite[Theorem~2.14]{Dur83}. 
%Indeed, 

% the formula
% $$\frac{1-q}{q-q^n}=\frac{1}{q+q^2+\cdots + q^{n-1}}
% $$
% gives
% \begin{align*}
% & \hspace{.5cm} \frac{1-q^2}{q-q^n}\left(1+\frac{1-q^2}{q-q^{n-1}}\right)
% \left(1+\frac{1-q^2}{q-q^{n-2}}\right) \cdots \left(1+\frac{1-q^2}{q-q^{3}}\right)
% \left(1+\frac{1-q^2}{q-q^{2}}\right)\\
% & = \frac{1+q}{q+q^2+\cdots + q^{n-1}}\left(1+\frac{1+q}{q+q^2+\cdots + q^{n-2}}\right)
% \left(1+\frac{1+q}{q+q^2+\cdots + q^{n-3}}\right)\\[2mm]
% & \hspace*{8cm} \cdots \left(1+\frac{1+q}{q+q^2}\right)\left(1+\frac{1+q}{q}\right) .
% \end{align*}
% Thus, 

%as $q\to 1^{-}$, the right hand side of (\ref{new-eq1}) approaches
%\begin{align*}
%& \hspace*{.5cm}\left(\frac{2}{n-1}\right)\left(1+\frac{2}{n-2}\right)\left(1+\frac{2}{n-3}\right)
%\cdots \left(1+\frac{2}{2}\right)\left(1+\frac{2}{1}\right)\\
%& = \left(\frac{2}{n-1}\right)\left(\frac{n}{n-2}\right)\left(\frac{n-1}{n-3}\right)
%\left(\frac{n-2}{n-4}\right) \cdots \left(\frac{5}{3}\right)
%\left(\frac{4}{2}\right)\left(\frac{3}{1}\right)\\
%& = n .
%\end{align*}
We also find that the ratio test easily provides the convergence of the series 
$z+\sum_{n=2}^\infty A_nz^n$ in the sub-disk $|z|<q/(q+1-q^2)$, where 
$$A_n=\left(\frac{1-q^2}{q-q^n}\right)
{\prod_{k=1}^{n-2}}\left(1+\frac{1-q^2}{q-q^{k+1}}\right).
$$

\noindent
{\bf Acknowledgement.} The work of the second author is supported by
the National Board for Higher Mathematics, Department of Atomic Energy, India.
Both the authors thank Professor S. Ponnusamy for bringing 
the article \cite{IMS90} to their attention and useful discussion on this topic.
The authors also thank the referee for his/her careful reading of the manuscript
and valuable comments. The authors also acknowledge the help of Dr. Amarjeet Nayak 
in rectifying some language-related issues with the paper.

\end{document}